\newtheorem{thm}{Theorem}
\newtheorem{cor}[thm]{Corollary}
\newtheorem{claim}[thm]{Claim}
\newtheorem{prop}[thm]{Proposition}
\newtheorem{lem}[thm]{Lemma}
\newtheorem{obs}[thm]{Observation}
\newtheorem{rem}[thm]{Remark}
\newtheorem{conj}[thm]{Conjecture}
\newcommand{\mc}{\mathcal}
\def\rcr{\overline{\hbox{\rm cr}}}
\def\pcr{\widetilde{\hbox{\rm cr}}}
\def\hal{\hbox{\rm hal}}
\journal{Discrete Mathematics}
\begin{document}

\begin{frontmatter}

\title{Point sets that minimize $(\le k)$-edges, $3$-decomposable drawings, and the rectilinear crossing number of $K_{30}$}

\author[chv]{M.~Cetina}
\ead{mcetina@ifisica.uaslp.mx}

\author[chv]{C.~Hern\'andez--V\'elez}
\ead{cesar@ifisica.uaslp.mx}

\author[jlm]{J.~Lea\~nos}
\ead{jleanos@mate.reduaz.mx}

\author[chv]{C.~Villalobos}

\address[chv]{Instituto de F\'{\i}sica, Universidad Aut\'onoma de San Luis Potos\'{\i}, San Luis Potos\'{\i}, M\'exico 78000}

\address[jlm]{Unidad Acad\'emica de Matem\'aticas, Universidad Aut\'onoma de Zacatecas, Zacatecas, M\'exico 98060}

\begin{abstract}
There are two properties shared by all known crossing-minimizing geometric drawings of $K_n$, for $n$ a multiple of $3$. First, the underlying $n$-point set of these drawings minimizes the number of $(\le k)$-edges, that means, has exactly $3\binom{k+2}{2}$ $(\le k)$-edges, for all $0\le k < n/3$. Second, all such drawings have the $n$ points divided into three groups of equal size; this last property is captured under the concept of $3$-decomposability.  In this paper we show that these properties are tightly related:   every $n$-point set with exactly $3\binom{k+2}{2}$ $(\le k)$-edges for all $0\le k < n/3$, is $3$-decomposable.  The converse, however, is easy to see that it is false. As an application, we prove that the rectilinear crossing number of $K_{30}$ is $9726$.
\end{abstract}

\begin{keyword}
$k$--edges \sep $3$-decomposability \sep rectilinear crossing number 
\end{keyword}

\end{frontmatter}

\section{Introduction}

The {\em rectilinear crossing number} $\rcr(G)$ of a graph $G$, is the minimum number of edge crossings in a {\em geometric drawing} of $G$ in the plane, that is, a drawing of $G$ in the plane where the vertices are points in general position and the edges are straight segments. Determining $\rcr(K_n)$, where $K_n$ is the complete graph with $n$ vertices, is a well-know open problem in combinatorial geometry initiated by Guy~\cite{Gu}.

The rectilinear crossing number problem is related with the concept of $k$-edges. A {\em $k$--edge} of an $n$--point set $P$, with $0 \le k \le n/2 - 1$, is a line through two points of $P$ leaving exactly $k$ points on one side. A $(\leq k)$--edge is an $i$--edge with $0 \leq i \leq k$. Let $E_k(P)$ denote the number of $k$--edges of $P$ and $E_{\le k}(P)$ denotes the number of $(\le k)$-edges, that is, $E_{\le k}(P) = \sum_{j=0}^k E_j(P)$. Finally, $E_{\le k}(n)$ denotes the minimum of $E_{\le k}(P)$ taken over all $n$-point sets $P$.

The exact determination of $E_{\le k}(n)$ is another notable open problem in combinatorial geometry. In 2005~\cite{AiGa}, Aichholzer et al. gave the following lower bound for $E_{\le k}(n)$:

\begin{equation}
E_{\le k}(n) \ge 3\binom{k+2}{2} + 3\binom{k+2-\lfloor n/3 \rfloor}{2} - \max\{0,(k+1-\lfloor n/3 \rfloor)(n - 3\lfloor n/3 \rfloor)\}\label{eq:edg},
\end{equation}
later, in 2007~\cite{AiGa07}, Aichholzer et al. proved that this lower bound is tight for $k \le \lfloor 5n/12 \rfloor -1$. 

The number of crossings in a geometric drawing of $K_n$ and the number of $k$-- and $(\le k)$--edges in the underlying $n$-point set $P$ are closely related by the following equality, independently proved by L\'ovasz et al.~\cite{LoVe} and \'Abrego and Fern\'andez-Merchant~\cite{Ab1}. For any set $P$ of $n$ points

\begin{align}
\rcr(P)  &  =3\binom{n}{4}-
{\displaystyle\sum\limits_{k=0}^{\left\lfloor n/2\right\rfloor -1}}
k\left(  n-k-2\right)  E_{k}\left(  P\right)  ,\text{ or equivalently,}
\nonumber\\
\rcr(P)  &  =
\left({\displaystyle\sum\limits_{k=0}^{\left\lfloor n/2\right\rfloor -1}}
\left(  n-2k-3\right)  E_{\leq k}\left(  P\right)\right)  -\frac{3}{4}\binom{n}
{3}+\left(  1+\left(  -1\right)  ^{n+1}\right)  \frac{1}{8}\binom{n}{2}.
\label{eq:credg}
\end{align}

Another concept that plays a central role in this paper is the $3$--decomposability, which is a  property shared by all known crossing-minimizing geometric drawings of $K_n$, for $n$ a multiple of $3$. Formally, we say that a finite point set $P$ is {\em $3$--decomposable} if it can be partitioned into three equal sized sets $A, B$ and $C$ such that there exists a triangle $T$ enclosing the point set $P$ and the orthogonal projection of $P$ onto the three sides of $T$ show $A$ between $B$ and $C$ on one side, $B$ between $C$ and $A$ on the second side, and $C$ between $A$ and $B$ on the third side. We say that a geometric drawing of $K_n$ is $3$-decomposable if its underlying point set is $3$-decomposable.

In the following result we establish the relationship between $3$--decomposability and the number of $(\le k)$-edges

\begin{thm}[Main Theorem]\label{thm:main}
Let $P$ be an $n$--point set, for $n$ a multiple of $3$,  with exactly $3\binom{k+2}{2}$ $(\le k)$-edges for all $0 \le k < n/3$, then $P$ is 3--decomposable.
\end{thm}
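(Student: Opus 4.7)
The plan is to read off from the hypothesis a canonical equipartition of $P$ into three sets $A, B, C$ of size $m:=n/3$, whose definition uses only the angular fans at the convex hull vertices, and then to realize the $3$-decomposition by projecting onto a suitable enclosing triangle.

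Since $E_{\le 0}(P)=3$, the convex hull of $P$ is a triangle; call its vertices $a,b,c$. Taking differences, the hypothesis is equivalent to $E_k(P)=3(k+1)$ for every $0\le k\le m-1$. For each hull vertex $v$, sort $P\setminus\{v\}$ by angular order about $v$ so that the two hull neighbors occupy the extreme positions; the $i$th line from $v$ is then a $k$-edge with $k=\min(i,n-2-i)$. Consequently, precisely $2m$ of the $n-1$ lines through $v$ are $(\le m-1)$-edges: the $m$ angularly extremal positions on each side of $v$'s fan. For each ordered pair of distinct hull vertices $v,u$, write $S_v(u)$ for the set of $m-1$ interior points (points of $P\setminus\{a,b,c\}$) lying in the extremal sector of $v$'s fan adjacent to $u$.

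The candidate equipartition is
\[
A:=\{a\}\cup\bigl(S_b(a)\cap S_c(a)\bigr),\quad B:=\{b\}\cup\bigl(S_a(b)\cap S_c(b)\bigr),\quad C:=\{c\}\cup\bigl(S_a(c)\cap S_b(c)\bigr).
\]
Pairwise disjointness is automatic: for instance, a common element of $A$ and $B$ would have to lie in $S_c(a)\cap S_c(b)$, which is empty because the ``toward $a$'' and ``toward $b$'' sectors at $c$ occupy opposite ends of $c$'s fan. Hence only a coverage claim remains — that every interior point lies in one of the three intersections — and then $|A|=|B|=|C|=m$ follows by cardinality. The main obstacle is exactly this coverage step. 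The plan is to prove it by an intricate double counting of $(\le k)$-edges across all $k<m$ simultaneously: the $2m$ per-vertex fan counts at $a,b,c$ must consistently cover all interior points, and any interior point whose ``nearest hull vertex'' label disagreed between two of the fans at $a,b,c$ would generate an extra low-level edge or cost a needed one, contradicting the tight identities $E_k(P)=3(k+1)$. I expect this will require the full strength of the hypothesis at every level $k<m$, perhaps inductively aligning the sectors layer by layer.

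Once the equipartition is in hand, with its geometric description — interior points of $A$ clustered near $a$, of $B$ near $b$, of $C$ near $c$ — the $3$-decomposability follows by choosing an enclosing triangle $T$ whose three sides are slightly tilted from parallel to $bc,ca,ab$. Orthogonal projection onto the side parallel to $bc$ carries interior points of $B$ near $b$'s projection (one end of the $bc$-image), those of $C$ near $c$'s (the other end), and those of $A$ — caught simultaneously in the ``toward $a$'' cones at $b$ and at $c$ — to the strict interior, so $A$ lies between $B$ and $C$. A small tilt of $T$'s sides, always available by the finiteness of $P$, secures the strict separation; the cyclic argument on the other two sides completes the proof of $3$-decomposability.
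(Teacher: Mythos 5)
Your proposal is a plan, not a proof: the step you yourself identify as ``the main obstacle'' --- that every interior point lies in one of the three coherent sector intersections $S_b(a)\cap S_c(a)$, $S_a(b)\cap S_c(b)$, $S_a(c)\cap S_b(c)$ --- is exactly where all of the content of the theorem sits, and you give no actual argument for it, only the expectation that ``an intricate double counting'' using $E_k(P)=3(k+1)$ for all $k<n/3$ will align the fans. Note that a counting of memberships only shows the average is right: each of the six sectors has $m-1$ interior points, so the $3(m-1)$ interior points carry $6(m-1)$ memberships in total, but nothing in your sketch forces each point to lie in exactly two sectors, let alone in two sectors ``pointing toward'' the same third hull vertex. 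Until that alignment is derived from the tightness of the $(\le k)$-edge counts at every level, there is no proof. A second, smaller gap is the final step: even granting the equipartition, membership of a point of $A$ in the angular sectors at $b$ and at $c$ toward $a$ is a combinatorial condition on angular ranks, not on the coordinate along $bc$; it does not by itself yield the three linear orders ($B$--$A$--$C$, $C$--$B$--$A$, $A$--$C$--$B$ under projection onto the three sides) that the definition of $3$-decomposability demands, and a ``small tilt'' of $T$ can only break ties, not repair a wrong order. So this step also needs a genuine argument.

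For comparison, the paper does not work with fans at the hull vertices at all. It passes to the circular (allowable) sequence of $P$, uses the Lov\'asz et al.\ machinery to replace the half-period by one with no confined transpositions (Proposition~\ref{prop:noconf}), observes that the hypothesis makes this half-period \emph{perfect} (Proposition~\ref{prop:perfect}), and then proves two structural statements about the liberation sequence of a perfect half-period (Propositions~\ref{prop:libseq} and~\ref{prop:3decomp}): the $a$'s or the $c$'s occur consecutively, and after re-cutting the half-period one gets a perfect half-period whose liberation sequence is all $c$'s followed by all $a$'s, which is precisely $3$-decomposability of the sequence. Those two propositions carry the combinatorial weight that your sketch defers to the unexecuted double count; if you want to pursue your geometric route, the analogue of their proofs is what you would have to supply, and it is not clear that your candidate partition by sector intersections is even the correct one for all configurations satisfying the hypothesis.
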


In fact, in~\cite{Ab:3sim} \'Abrego et al. conjectured that for each positive integer $n$ multiple of $3$, all crossing--minimal geometric drawings of $K_n$ are $3$--decomposable.

As an application of the Main Theorem we prove that a $30$-point set that minimize the crossing number is $3$-decomposable. Aichholzer established $9726$ as the upper bound for $\rcr(K_{30})$~\cite{Aiweb}, moreover we have the following theorem

\begin{thm}[The Rectilinear Crossing Number of $K_{30}$]\label{thm:second}
$\rcr(K_{30})$ is $9726$.
\end{thm}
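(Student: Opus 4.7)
The upper bound $\rcr(K_{30})\le 9726$ is Aichholzer's; the work is to establish the matching lower bound $\rcr(K_{30})\ge 9726$. The plan is a three-stage argument in which the Main Theorem is the pivot that upgrades a standard linear-programming bound into the sharp one.

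\textbf{Stage 1 (preliminary LP bound).} Specialize the identity~(\ref{eq:credg}) to $n=30$ to obtain
\[
\rcr(P)\;=\;\sum_{k=0}^{14}(27-2k)\,E_{\le k}(P)\;-\;\tfrac{3}{4}\binom{30}{3}.
\]
Plug in the Aichholzer et al.\ bound~(\ref{eq:edg}): for $k\le 9$ this gives $E_{\le k}(P)\ge 3\binom{k+2}{2}$ (tight in this range), and for $10\le k\le 14$ the full three-term expression applies (with $\lfloor 30/3\rfloor=10$ and $30-3\lfloor 30/3\rfloor=0$, so only the first two summands contribute). Summing yields a preliminary lower bound on $\rcr(K_{30})$ that is strictly less than $9726$ but by only a small amount; call the shortfall $\delta$.

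\textbf{Stage 2 (forcing $3$-decomposability).} Now argue by contradiction: suppose $P$ is a $30$-point set with $\rcr(P)\le 9725$. Writing $\epsilon_k:=E_{\le k}(P)-L_k\ge 0$, where $L_k$ is the bound from~(\ref{eq:edg}), the identity becomes $\rcr(P)=(9726-\delta)+\sum_{k=0}^{13}(27-2k)\epsilon_k$ (the $k=14$ term being identically $435$). The coefficient $27-2k$ is at least $9$ for $k\le 9$, and the values $\epsilon_k$ are non-negative integers; provided $\delta$ is small enough, the inequality $\sum_k(27-2k)\epsilon_k\le\delta-1$ forces $\epsilon_k=0$ for every $0\le k\le 9$. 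Equivalently, $E_{\le k}(P)=3\binom{k+2}{2}$ for all $0\le k<n/3$, and the Main Theorem~\ref{thm:main} applies: $P$ is $3$-decomposable.

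\textbf{Stage 3 (improved bounds for $3$-decomposable sets).} It remains to show that any $3$-decomposable $30$-point set $P$ with partition $A\cup B\cup C$, $|A|=|B|=|C|=10$, and enclosing triangle $T$, satisfies sharper lower bounds on $E_{\le k}(P)$ for $10\le k\le 13$ — bounds large enough that substitution into~(\ref{eq:credg}) produces $\rcr(P)\ge 9726$, contradicting the assumption in Stage~2. The strategy is to classify $(\le k)$-edges by the colour pattern of the endpoints (monochromatic within one of $A,B,C$ versus bichromatic) and by which side of $T$ the ``minority'' set of points lies near; the projection/betweenness condition on the three sides of $T$ then produces, for each colour class and each $k$, a lower bound strictly exceeding what~(\ref{eq:edg}) alone gives.

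The main obstacle will be Stage~3: converting the qualitative projection property of $3$-decomposability into quantitative, summable lower bounds on $E_{\le k}(P)$ for $k$ in the middle range $n/3\le k\le n/2-2$. The projection condition is inherently about points near the boundary of $T$, yet the $k$-edges we must count can be oriented arbitrarily, so one will need a careful case analysis on the colour pattern together with a global accounting (likely via circular sequences, halving pseudolines, or an averaging over the three projection directions) to propagate the boundary structure into usable inequalities. Once these improved estimates are in hand, the final substitution into~(\ref{eq:credg}) is a bookkeeping step.
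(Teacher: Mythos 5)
Your Stages 1--2 do follow the paper's opening moves, but already there the quantitative claim does not close. Plugging (\ref{eq:edg}) into (\ref{eq:credg}) alone gives a baseline of only $9705$, so your shortfall is $\delta=21$, and then $\sum_k(27-2k)\epsilon_k\le\delta-1$ does \emph{not} force $\epsilon_k=0$ for all $k\le 9$ (for instance $\epsilon_9=2$ costs only $18$). The paper's baseline of $9723$ uses, in addition, the halving-line bounds (\ref{eq:hallin}) and (\ref{eq:nhallin}), which give $N_{15}(\Pi)\le 72$ and $N_{14}(\Pi)\ge 72$; only with this extra input does non-tightness of (\ref{eq:edg}) for some $k\le 12$ push the count to at least $9727$, which is what forces tightness (hence, by Theorem~\ref{thm:main}, $3$-decomposability) in any configuration with at most $9726$ crossings -- and the paper needs tightness all the way to $k=12$, not just $k\le 9$, for its later structural lemmas. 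You never invoke (\ref{eq:hallin})--(\ref{eq:nhallin}), and they are not cosmetic: the number of halvings is the quantity around which the entire endgame revolves.

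The decisive gap is Stage 3, which you yourself flag as ``the main obstacle'' and leave as a strategy sketch. Worse, the direction proposed -- lower bounds on $E_{\le k}(P)$ for $10\le k\le 13$ ``strictly exceeding what (\ref{eq:edg}) alone gives'' -- is impossible for $k=10,11,12$: the $9726$-crossing configuration of Aichholzer--Krasser is itself $3$-decomposable and attains (\ref{eq:edg}) with equality for all $k\le 12$, so no improved bound restricted to $3$-decomposable sets exists in that range. All three missing crossings must come from the one remaining degree of freedom, the number of halving edges $E_{14}=N_{15}(\Pi)$ (equivalently $E_{\le 13}=\binom{30}{2}-E_{14}$): one must sharpen the generic bound $N_{15}(\Pi)\le 72$ to $N_{15}(\Pi)\le 69$ for a $3$-decomposable optimal $30$-half-period. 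That is exactly what occupies Sections~\ref{sec:k30} and~\ref{sec:proofk30}: the exact bichromatic count (\ref{eq:bicro}) giving $N^{bi}_{15}(\Pi)=15$, the Balogh--Salazar digraph argument bounding each monochromatic class by $20$ and then (Corollary~\ref{cor:no20}) by $19$, the structural lemmas (Lemmas~\ref{lem:11-13}--\ref{lem:1-2}) locating the permutations where $a_{10},b_{10},c_{10}$ enter the $13$-center, and the final case analysis showing the three classes cannot all be large, so $N^{mono}_{15}(\Pi)\le 19+18+17=54$, hence $N_{15}(\Pi)\le 69$, $N_{14}(\Pi)=75$, and (\ref{eq:credg}) yields $9726$. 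None of this machinery, nor any substitute for it, appears in your proposal, so the proof is incomplete at precisely the step that carries the paper's content.
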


All the results of this paper are proved in the more general context of generalized configuration of points. 
In this scope we define by analogy the {\em pseudolinear crossing number} $\pcr(K_n)$. 

Our main tool are the allowable sequences which will be formally define in Section~\ref{sec:allseq}, and we mention some preliminary results due to L\'ovasz et al. in~\cite{LoVe}. In Section~\ref{sec:mainthm} we prove the Main Theorem. In Section~\ref{sec:k30} we use the Main Theorem to establish that a configuration with $30$ points that minimize the crossing number is $3$--decomposable and we give some implications of the $3$--decomposability. Finally, in Section~\ref{sec:proofk30} is the formal proof of Theorem~\ref{thm:second}.

\section{Allowable Sequences}\label{sec:allseq}

An {\em allowable sequence $\bf \Pi$} is a doubly infinite sequence $\ldots, \pi_{-1}, \pi_0, \pi_1, \ldots$ of permutations of $n$ elements, where consecutive permutations differ by a transposition of neighboring elements, and $\pi_i$ is the reverse permutation of $\pi_{i+\binom{n}{2}}$. Thus $\bf \Pi$ has period $2\binom{n}{2}$, and the hole information of $\bf \Pi$ is contained in any of its $n$--half-periods, which we call {\em $n$--half-periods}. We usually denote by $\Pi$ an $n$-half-period of $\bf \Pi$. 

It is know that if $P$ is a set of $n$ points in the plane in general position, then all the combinatorial information of $P$ can be encoded by an allowable sequence $\bf \Pi_P$ on the set $P$, called
{\em circular sequence}  associated to $P$~\cite{GoPo}. It is important to note that most allowable sequence are not circular sequences, however there is a one-to-one correspondence between allowable sequences and generalized configurations of points~\cite{GoPo}. 

We have the following definitions and notations for allowable sequences. A transposition that occurs between elements in sites $i$ and $i + 1$ is an {\em $i$--transposition}, and we say that moves through the {\em $i$th--gate}. In this new setting an $i$--transposition, or $(n-i)$--transposition corresponds to an $(i-1)$--edge. For $i \leq n/2$, an {\em $i$--critical transposition} is either an $i$--transposition or an $(n-i)$--transposition, and a {\em $(\leq k)$--critical transposition} is a transposition that is $i$--critical for some $1 \leq i \leq k$. If $\Pi$ is an $n$--half-period, then $N_k(\Pi)$ and $N_{\leq k}(\Pi)$ denote the number of $k$-critical transpositions and $(\leq k)$--critical transpositions in $\Pi$, respectively. Therefore $N_k(\Pi)=E_{k-1}(\Pi), N_{\le k}(\Pi)=E_{\le k-1}(\Pi)$. When $n$ is even an $n/2$--transposition is also called {\em halving} and $h(\Pi)$ denotes the number of halvings, and thus $h(\Pi) = E_{n/2 - 1}(\Pi)$.

The identity (\ref{eq:credg}) relating $k$-edges to
crossing number was originally proved for allowable sequences. All these definitions
and functions coincide with their original counterparts for $P$ when $\Pi$ is
the circular sequence of $P$. However, when $\rcr(n),$ and $E_{\leq k}\left(  n\right)  $ are minimized 
over all allowable sequences on $n$ points rather than over all sets
of $n$ points, the corresponding quantities may change so we define the notation $\widetilde{cr}(n)$ and $\widetilde{E}_{\leq
k}\left(  n\right)  $. But it is clear that $\pcr(n) \leq \rcr(n)$ and $\widetilde
{E}_{\leq k}\left(  n\right)  \leq E_{\leq k}\left(  n\right)$. \'{A}brego
et al. \cite{Ab2} proved that the lower bound~(\ref{eq:edg}) on $E_{\leq
k}\left(  n\right)  $ is also a lower bound on $\widetilde{E}_{\leq k}\left(
n\right)  $ and use it to extend the lower bound on $\overline{cr}(n)$ to $\widetilde{cr}(n)$.

Let $\Pi=(\pi_0,\pi_1, \ldots, \pi_{\binom{n}{2}})$ be an $n$--half-period.
For each $k < n/2$, define $m=m(k,n):=n-2k$. In order to
keep track of $(\le k)$--critical transpositions in $\Pi$, it is convenient to
label the points so that the starting permutation is
$$
\pi_0 = (a_k, a_{k-1}, \ldots, a_1, b_1, b_2, \ldots, b_m, c_1, c_2,
\ldots, c_k).
$$

Sometimes it will be necessary to say who an element is moving, so we will say that an element $x$ {\em exits} (respectively, {\em enters}) {\em through
  the $i$th $A$--gate} if it moves from the position $k-i+1$ to the position
  $k-i+2$ (respectively, from the position $k-i+2$ to the position $k-i+1$)
  during a transposition with another element. Similarly, $x$ {\em
  exits} (respectively, {\em enters}) {\em through the $i$th
  $C$--gate} if it moves from the position $m+k+i$ to the position $m+k+i-1$
  (respectively, from $m+k+i-1$ to $m+k+i$) during a transposition.

An $a\in \{a_1, \ldots, a_k\}$ (respectively, $c\in \{c_1,\ldots,
c_k\}$) is {\em confined} until the first time it exits through the
first $A$--gate (respectively, $C$--gate); then it becomes {\em free}.
A transposition is {\em confined} if both elements involved are
confined.

The following results, from Proposition~\ref{prop:noconf} to Proposition~\ref{prop:perfect}, are due to Lov\'asz et al. in the paper~\cite{LoVe}:

\begin{prop}\label{prop:noconf}
Let $\Pi_0$ be an $n$--half-period, and let $k < n/2$. Then there
is an $n$--half-period $\Pi$, with the same number of $(\le
k)$--critical transpositions as $\Pi_0$, and with no confined
transpositions.
\end{prop}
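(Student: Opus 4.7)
The plan is to modify $\Pi_0$ by a finite sequence of local moves, each preserving $N_{\le k}$, until no confined transpositions remain. I would proceed by induction on the number of confined transpositions in $\Pi_0$: if there are none, take $\Pi = \Pi_0$; otherwise, produce $\Pi_0'$ with strictly fewer confined transpositions and $N_{\le k}(\Pi_0') = N_{\le k}(\Pi_0)$, then apply the inductive hypothesis. By the $a$-$c$ symmetry of the setup, it suffices to handle the case of an $a$-$a$ confined transposition $\tau_t$ swapping confined elements $a_i, a_j$ at some gate $s$ with $1 \le s \le k-1$; I would choose $\tau_t$ to be the \emph{first} confined transposition in $\Pi_0$, so that no modifications made at or after time $t$ can disturb an earlier confined transposition.

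The two local moves available in an allowable sequence are: (i) commutation of two consecutive transpositions whose gates differ by at least two, which preserves each individual gate-count; and (ii) the \emph{triangle flip}, which replaces a block of three consecutive transpositions at gates $(p,p+1,p)$ by one at gates $(p+1,p,p+1)$ involving the same three pseudolines, and which shifts exactly one unit of gate-count between gate $p$ and gate $p+1$. My plan is to push $\tau_t$ forward in time by iterated commutations until it is blocked by a transposition at gate $s-1$ or $s+1$; then apply a triangle flip. If the third element involved is a $b$-element or an already-free element (which is guaranteed when the flip occurs at the boundary, i.e.~at gates $(k-1,k)$), then the flip transports either $a_i$ or $a_j$ through the first $A$-gate in one of the new transpositions, so the eventual $a_i$-$a_j$ swap in the rewritten block is no longer confined. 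Because $s \le k-1$, every gate affected by any of these moves is at most $k$, so each move preserves $N_{\le k}$.

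The main obstacle is termination in the situation where the blocker involves a third element that is itself a confined $a$: a direct check shows that the triangle flip then merely permutes three confined transpositions without eliminating any. To handle this, I would introduce a lexicographic potential $(C(\Pi),\, -t_1(\Pi))$, where $C(\Pi)$ is the number of confined transpositions and $t_1(\Pi)$ is the time-index of the first one; every triangle flip either strictly decreases $C$ (in the boundary case) or strictly increases $t_1$ (in the interior case, since the first confined transposition is pushed to a later slot while no earlier confined transpositions are created, $\tau_t$ being first). Since both $a_i$ and $a_j$ must exit through the first $A$-gate before the half-period ends (they lie to the right of the $b$'s in $\pi_{\binom{n}{2}}$), the potential cannot decrease forever in the interior regime, so eventually a boundary flip occurs and $C$ strictly decreases. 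A careful case analysis of the triangle flips, distinguishing the type ($a$, $b$, free, or confined) of the third element, is required to verify the potential argument and to confirm that no new confined transposition is inserted earlier than $\tau_t$; this completes the induction.
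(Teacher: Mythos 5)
The paper itself offers no proof of this proposition: it is imported from Lov\'asz, Vesztergombi, Wagner and Welzl \cite{LoVe}, so your argument has to stand on its own merits. As a plan it is sensible: commutations preserve every individual gate count; a braid (triangle) flip whose two gates both lie in $\{1,\dots,k\}$ preserves $N_{\le k}$; and your analysis of the boundary flip at gates $(k-1,k)$ is correct --- the element in position $k+1$ can never be a confined $a$, and the flip reschedules the $a_i$--$a_j$ crossing to gate $k$, after one of the two has exited the first $A$--gate, so that crossing ceases to be confined and no new confined crossing is created.

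The genuine gap sits exactly where you located the difficulty, and your fix does not work. Consider the interior flip whose third element is a confined $a$: the block consists of the crossings $x y$ (gate $p$), $x z$ (gate $p+1$), $y z$ (gate $p$) in slots $t',t'+1,t'+2$ with $x,y,z$ all confined, so all three crossings are confined; after the flip the block is $y z$ (gate $p+1$), $x z$ (gate $p$), $x y$ (gate $p+1$) in the same three slots, and all three crossings are still confined. Thus $C$ is unchanged and the earliest confined transposition still occupies slot $t'$, so $t_1$ does not increase; the claimed dichotomy ``boundary flips decrease $C$, interior flips increase $t_1$'' fails precisely in the case the potential was introduced to handle, and termination is not established. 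Two subsidiary steps are also unproved: being ``blocked'' by a single transposition at gate $s\pm1$ gives only two consecutive letters at adjacent gates, not the three-letter pattern a triangle flip requires, so the availability of a flip at that moment needs an argument; and when you commute $\tau_t$ past later transpositions, any confined transpositions you pass are pulled to earlier slots, so even the commutation phase need not increase $t_1$, undermining the choice of ``first confined transposition'' as an anchor. To complete a proof along these lines you need either a potential that genuinely decreases --- for instance one measuring the number of events separating the confined crossing from the first exit of $a_i$ or $a_j$ through the first $A$--gate --- or a direct argument that the confined crossing can always be transported to a boundary flip by moves that never straddle gates $(k,k+1)$; the ``careful case analysis'' you defer is not a routine verification but the actual mathematical content of the proposition.
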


In view of this statement, for the rest of the section we assume that
the $n$--half-period $\Pi$ under consideration has no confined
transpositions.

The {\em liberation sequence} $\sigma(\Pi)$ (or simply $\sigma$ if no
confusion arises) of $\Pi$ contains all the
$a$'s and all the $c$'s, in the order in which they become free in
$\Pi$. Since $\Pi$ has no confined transpositions, the $a$'s appear in
increasing order, as do the $c$'s.  We let $T(a_i)$ (respectively
$T(c_i)$) denote the set of all those $c$'s (respectively $a$'s)
that appear after $a_i$ (respectively $c_i$) in $\sigma$.

A transposition that swaps elements in the positions $i$ and $i+1$ occurs
  in the {\em A--Zone} (respectively, {\em C--Zone}) if $i \le k$
  (respectively, $i \ge k+m$).  Such transpositions are of obvious
  relevance: a transposition is $(\le k)$--critical if and only if it occurs
  either in the $A$--Zone or in the $C$--Zone.

For $1 \le i \le j \le k$, the $i$th $A$--gate is a {\em compulsory
exit--gate} for $a_j$, and the $i$th $C$--gate is a {\em compulsory
entry--gate} for $a_j$: that is, $a_j$ has to exit through the $i$th
$A$--gate at least once, and to enter the $i$th $C$--gate at least
once.  Analogous definitions and observations hold for $c_j$: the
$i$th $A$--gate is a {\em compulsory entry--gate} for $c_j$, and the
$i$th $C$--gate is a {\em compulsory exit--gate} for $c_j$.  A
transposition in which an element enters (respectively, exits) one of
its compulsory entry (respectively, exit)--gates for the first time is
a {\em discovery} transposition {\em for} the element.  A
transposition is a {\em discovery} transposition if it is a discovery
transposition for at least one of the elements involved. If it is a
discovery transposition for both elements, then it is a {\em
double--discovery} transposition (for the reader familiar
with~\cite{LoVe}, what we call double--discovery transpositions are
the transpositions represented by a directed edge in the {\em savings
digraph} of~\cite{LoVe}).

Discovery and double--discovery transpositions play a central role
in~\cite{LoVe}.  The key results are the following, which hold for
any $n$--half-period with no confined transpositions (the first
statement is a straightforward counting, whereas the second does
definitely require a proof).

\begin{obs}\label{obs:disctrans}
There are (exactly) $2\binom{k+1}{2}$ transpositions that are discovery
transpositions for some $a$, and (exactly) $2\binom{k+1}{2}$
transpositions that are discovery transpositions for some $c$.
\end{obs}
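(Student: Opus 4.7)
The plan is a bookkeeping argument built directly on the notion of compulsory gates. For each element $a_j$ with $1 \le j \le k$, the definition gives $j$ compulsory exit-gates (the $A$-gates $1,\ldots,j$) and $j$ compulsory entry-gates (the $C$-gates $1,\ldots,j$), hence $2j$ compulsory gates in total. Summing, the number of compulsory-gate obligations attached to the $a$'s is
\[
\sum_{j=1}^{k} 2j \;=\; k(k+1) \;=\; 2\binom{k+1}{2}.
\]
I would prove the observation by exhibiting a bijection between these obligations and the discovery transpositions for some $a$.

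For the existence half (surjectivity), I would use the endpoints of $\Pi$: $a_j$ starts at position $k-j+1$, while in the reverse permutation it occupies position $n-k+j$, which exceeds $k$. Hence for every $i \le j$, $a_j$ starts to the left of (or at) position $k-i+1$ and finishes strictly to the right of position $k-i+2$, so at some transposition it must move from $k-i+1$ to $k-i+2$, i.e.\ exit through the $i$th $A$-gate. A completely analogous net-displacement argument on the $C$-zone side forces $a_j$ to enter each of its first $j$ $C$-gates at least once. Consequently every compulsory gate is used, and each has a well-defined first-time-use transposition, its discovery.

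The crux is injectivity: different pairs (element, compulsory gate) must give different transpositions. Two compulsory gates for the same $a_j$ correspond to distinct sites in the permutation, so a single transposition cannot realize two of them. If instead a transposition swapped two distinct elements $a_j$ and $a_{j'}$, then in the $A$-zone one of them moves right (an $A$-gate exit, compulsory for $a$'s) while the other moves left (an $A$-gate entry, which is \emph{not} compulsory for $a$'s); similarly in the $C$-zone only the $a$ moving left (entering a $C$-gate) can contribute a compulsory match, since $C$-gate exits are not compulsory for $a$'s. Thus each transposition is a discovery for at most one $a$, and the obligation-to-transposition map is injective.

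Combining existence and injectivity yields exactly $2\binom{k+1}{2}$ discovery transpositions for some $a$. The count for the $c$'s follows by the symmetric argument, swapping the roles of $a$'s and $c$'s and of $A$- and $C$-gates throughout. I expect the only genuinely subtle step to be the injectivity argument, since it is precisely there that one must exploit the asymmetry between the two directions of a swap: compulsory movement for an $a$ is rightward in the $A$-zone and leftward in the $C$-zone, which forbids two $a$'s from being simultaneously discovered at the same transposition.
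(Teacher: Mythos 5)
Your proof is correct and is exactly the ``straightforward counting'' the paper alludes to: the paper states Observation~\ref{obs:disctrans} without proof (attributing it to Lov\'asz et al.), and your argument---count the $k(k+1)=2\binom{k+1}{2}$ pairs (element, compulsory gate), note each gate is crossed at least once and so has a well-defined first-use transposition, then check that no single transposition can serve two such pairs---is the intended bookkeeping. One small correction to your directional bookkeeping: by the paper's definitions, entering the $i$th $C$--gate means moving from position $m+k+i-1$ to $m+k+i$, i.e.\ rightward, so the compulsory moves of an $a$ are rightward in \emph{both} zones (and those of a $c$ leftward in both); your injectivity step is unaffected, since all it uses is that in a swap of two $a$'s exactly one of them performs the compulsory type of crossing (an $A$--gate exit or a $C$--gate entry) while the other performs the non-compulsory one.
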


\begin{prop}\label{prop:dobldisc}
There are {\em at most} $\binom{k+1}{2}$ double--discovery
transpositions.
\end{prop}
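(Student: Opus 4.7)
The plan is to work inside the savings digraph of Lov\'asz et al.: let $D$ be the directed graph with vertex set $\{a_1,\ldots,a_k,c_1,\ldots,c_k\}$, with an edge $a_i\to c_j$ for each double-discovery (DD) transposition in the $A$--Zone between $a_i$ and $c_j$, and an edge $c_j\to a_i$ for each such transposition in the $C$--Zone. The goal reduces to $|E(D)|\le\binom{k+1}{2}$.

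First I would establish a per-pair bound. A DD in the $A$--Zone at the $s$-th gate forces $s$ to be a compulsory exit-gate for $a_i$ and a compulsory entry-gate for $c_j$, so $s\le\min(i,j)$, and the transposition is uniquely determined as the first $A$-exit of $a_i$ through the $s$-th gate coinciding with the first $A$-entry of $c_j$ through it. A routine check then rules out two distinct DDs between a fixed pair $(a_i,c_j)$ within the same zone. To exclude DDs between $(a_i,c_j)$ in both zones simultaneously, I would use the liberation sequence $\sigma(\Pi)$ and the fact (guaranteed by the absence of confined transpositions, cf.\ Proposition~\ref{prop:noconf}) that the $a$'s appear in $\sigma$ in increasing order, and so do the $c$'s: if $a_i$ is freed before $c_j$, then by the time $c_j$ is about to reach the first $A$-gate, $a_i$ is already free and cannot be adjacent to $c_j$ across an $A$-gate at a discovery moment, forcing any DD between them into the $C$--Zone (and symmetrically). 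Hence each pair $(a_i,c_j)$ contributes at most one edge to $D$.

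Second, I would obtain the global count by an injection into a combinatorially small set. Using $\sigma$ to orient each DD edge compatibly with whether $a_i\in T(c_j)$ or $c_j\in T(a_i)$, and combining with the gate restriction $s\le\min(i,j)$, I expect each edge of $D$ to inject into the set of pairs $(s,t)$ with $1\le s\le t\le k$, whose cardinality is $\binom{k+1}{2}$. A cleaner variant is to show that $D$ is acyclic and has a staircase-like structure aligned with $\sigma$, from which standard degree counting yields the same bound.

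The main obstacle will be the bookkeeping required to prove that at most one of the two zones is ever available for a DD between a given pair, and, more broadly, that no crossing pattern of DDs can push the total past $\binom{k+1}{2}$. This demands carefully tracking the interplay between the gate constraint $s\le\min(i,j)$, the liberation order of the involved elements, and the chronological order of discovery events inside $\Pi$; the fact that confined transpositions have been eliminated is precisely what makes $\sigma$ a well-behaved linear order on which to hang the argument.
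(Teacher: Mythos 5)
Your write-up does not yet contain a proof: the decisive step --- the claimed injection of double--discovery transpositions into the $\binom{k+1}{2}$ pairs $(s,t)$ with $1\le s\le t\le k$ --- is only announced (``I expect each edge of $D$ to inject\ldots''), never defined, and never shown to be injective; you yourself flag it as the ``main obstacle.'' Meanwhile the part you do argue in detail, the per-pair bound, is both immediate and far too weak: in an $n$--half-period every pair of elements transposes exactly once, so each pair $(a_i,c_j)$ trivially carries at most one double--discovery transposition (no liberation-sequence analysis of ``both zones'' is needed), but this only yields the bound $k^2$, and the entire content of Proposition~\ref{prop:dobldisc} is the improvement from $k^2$ to $\binom{k+1}{2}$.

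For comparison, the paper itself gives no proof --- it quotes the result from Lov\'asz et al.~\cite{LoVe} --- and the argument there, which your savings-digraph setup gestures at, rests on two quantitative facts that are absent from your proposal and would have to be established. First, a double--discovery in the $C$--Zone necessarily pairs some $a_i$ (entering, for the first time, one of its $i$ compulsory entry--gates) with a $c_j$ that is not yet free at that moment, hence $c_j\in T(a_i)$; since the gates give at most $i$ such events and the partners are distinct elements of $T(a_i)$, the element $a_i$ lies on at most $\min\{i,|T(a_i)|\}$ double--discoveries in the $C$--Zone, and symmetrically $c_i$ lies on at most $\min\{i,|T(c_i)|\}$ in the $A$--Zone (this is exactly the quantity that reappears in the definition of perfectness). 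Second, for any liberation sequence with the $a$'s and the $c$'s in increasing order (guaranteed by Proposition~\ref{prop:noconf}) one must prove
\begin{equation*}
\sum_{i=1}^{k}\min\{i,|T(a_i)|\}+\sum_{i=1}^{k}\min\{i,|T(c_i)|\}\;\le\;\binom{k+1}{2},
\end{equation*}
which is where the interleaving structure of $\sigma$ actually enters. Without these two steps (or a genuine substitute for them), the orientation of edges ``compatibly with $\sigma$'' and the hoped-for ``staircase structure'' remain precisely the bookkeeping that has not been done, so the proposal has a real gap at the heart of the statement.
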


Since each discovery transposition is $(\le
k)$--critical, these statements immediately imply the following.

\begin{prop}\label{prop:lwcritrans}
There are at least $3\binom{k+1}{2}$ $(\le k)$--critical
transpositions.
\end{prop}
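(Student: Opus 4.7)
The plan is to obtain the lower bound by a short inclusion--exclusion argument on the set of discovery transpositions, combining the two previous results. Let $D_a$ denote the set of transpositions that are discovery transpositions for some $a$-element, and let $D_c$ denote the analogous set for the $c$-elements. Observation~\ref{obs:disctrans} gives $|D_a|=|D_c|=2\binom{k+1}{2}$. Since every discovery transposition takes place at an $A$-gate (position $i\le k$) or a $C$-gate (position $i\ge k+m$), every element of $D_a\cup D_c$ is $(\le k)$-critical, so the task reduces to showing that $|D_a\cup D_c|\ge 3\binom{k+1}{2}$.

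The next step would be to identify $D_a\cap D_c$ with a subset of the double-discovery transpositions. A transposition lying in $D_a\cap D_c$ swaps only two elements, so those two elements must simultaneously witness its membership in both $D_a$ and $D_c$: one of them is an $a$ undergoing a discovery through one of its own compulsory gates, and the other is a $c$ doing the same. By definition this is a double-discovery transposition, and Proposition~\ref{prop:dobldisc} then gives $|D_a\cap D_c|\le \binom{k+1}{2}$. Inclusion--exclusion immediately yields $|D_a\cup D_c|\ge 2\binom{k+1}{2}+2\binom{k+1}{2}-\binom{k+1}{2}=3\binom{k+1}{2}$, which is the desired bound.

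The only substantive step is verifying that a transposition in $D_a\cap D_c$ really carries discoveries for both an $a$ and a $c$ at the same time, rather than being a coincidence between an $a$-discovery and a separate, unrelated $c$-discovery: but since a single transposition involves only two elements, this is forced. I do not anticipate any real obstacle here; once Observation~\ref{obs:disctrans} and Proposition~\ref{prop:dobldisc} are in hand, the proof is a two-line inclusion--exclusion, which is why the excerpt describes the implication as immediate.
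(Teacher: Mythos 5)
Your proof is correct and is exactly the argument the paper has in mind: the counts from Observation~\ref{obs:disctrans}, the bound from Proposition~\ref{prop:dobldisc}, and inclusion--exclusion over the discovery transpositions (with the observation that a transposition in $D_a\cap D_c$ must be a double--discovery, since it involves only two elements), all of which are $(\le k)$--critical. You have simply made explicit what the paper dismisses as an immediate consequence.
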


An $n$--half-period $\Pi$ with no confined transpositions
  is  {\it perfect} if the following
hold:
\begin{description}
\item{(a)} Each transposition in $\Pi$ that occurs in the $A$--Zone or
  in the $C$--Zone is a discovery transposition.

\item{(b)}  $a_i$ is involved in (exactly) $\min\{i,|T(a_i)|\}$
  double--discovery transpositions in the $C$--Zone.

\item{(c)} Each $c_i$ is involved in (exactly) $\min\{i,|T(c_i)|\}$
  double--discovery transpositions in the $A$--Zone.
\end{description}

The following result is implicit in the proof of Theorem 10 in~\cite{LoVe}.

\begin{prop}\label{prop:perfect}
If $\Pi$ is perfect, then it has exactly $3\binom{k+1}{2}$ $(\le
k)$--critical transpositions for all $k \leq m$. Conversely, if $\Pi$ has no confined
transpositions, and has exactly $3\binom{k+1}{2}$ $(\le k)$--critical
transpositions for all $k \leq m$, then it is perfect.
\end{prop}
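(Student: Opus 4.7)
The plan is to reduce both directions of the proposition to a single counting identity that pins down the number of double--discovery transpositions. Let $\delta_k$ denote the number of double--discovery transpositions and let $R_k$ denote the number of $(\le k)$--critical transpositions that are not discoveries. Since any transposition is a discovery for at most one $a$ and at most one $c$, inclusion--exclusion combined with Observation~\ref{obs:disctrans} gives
\begin{equation*}
N_{\le k}(\Pi) \;=\; 4\binom{k+1}{2} - \delta_k + R_k,
\end{equation*}
where $4\binom{k+1}{2} - \delta_k$ is the total number of discovery transpositions. By Proposition~\ref{prop:dobldisc} and $R_k\ge 0$ this recovers Proposition~\ref{prop:lwcritrans}, with equality $N_{\le k}(\Pi)=3\binom{k+1}{2}$ iff $R_k=0$ and $\delta_k=\binom{k+1}{2}$.

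For the forward direction, (a) yields $R_k=0$ at once. Every double--discovery in the $C$--zone is uniquely associated with an $a$ (entering a C-gate for the first time) and every double--discovery in the $A$--zone with a $c$ (entering an A-gate for the first time), so (b) and (c) give
\begin{equation*}
\delta_k \;=\; \sum_{i=1}^k \min\{i,|T(a_i)|\} + \sum_{j=1}^k \min\{j,|T(c_j)|\}.
\end{equation*}
It then suffices to prove the combinatorial identity that the right-hand side equals $\binom{k+1}{2}$ for every liberation sequence $\sigma$. I would expand each minimum as a sum of indicators, $\min\{i,|T(a_i)|\}=\sum_{l=1}^{k}\mathbb{1}[l\le i]\,\mathbb{1}[c_{k-l+1}\in T(a_i)]$, substitute $j:=k-l+1$ to rewrite the first sum as the cardinality of $\{(i,j)\in[k]^2: i+j>k \text{ and } a_i\text{ precedes }c_j\text{ in }\sigma\}$, perform the symmetric manipulation on the second sum, and observe that for every pair $(i,j)$ with $i+j>k$ exactly one of $a_i,c_j$ precedes the other in $\sigma$. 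The total count is therefore $|\{(i,j)\in[k]^2:i+j>k\}|=k^2-\binom{k}{2}=\binom{k+1}{2}$.

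For the converse, $N_{\le k}(\Pi)=3\binom{k+1}{2}$ forces $R_k=0$ (which is (a)) and $\delta_k=\binom{k+1}{2}$. The proof of Proposition~\ref{prop:dobldisc} in Lov\'asz et al.~\cite{LoVe} in fact establishes the sharper per-element upper bounds: at most $\min\{i,|T(a_i)|\}$ double-discoveries involve $a_i$ in the $C$--zone, and at most $\min\{j,|T(c_j)|\}$ involve $c_j$ in the $A$--zone. The identity above shows that these per-element bounds sum to exactly $\binom{k+1}{2}$, so saturation of $\delta_k$ forces every individual bound to be tight, which is precisely (b) and (c).

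The main obstacle is the combinatorial identity in the second paragraph; everything else is bookkeeping built on the cited statements. That identity is the step at which the coefficient $3$ in $3\binom{k+1}{2}$ actually materialises as $4-1$, and it is pushed through by the indicator-expansion and pairwise-counting argument on $\sigma$.
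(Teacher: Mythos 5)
The paper never actually proves this proposition --- it is stated as ``implicit in the proof of Theorem 10 in~\cite{LoVe}'' --- so your argument is a reconstruction rather than a variant of a printed proof, and in substance it is correct. The identity $N_{\le k}(\Pi)=4\binom{k+1}{2}-\delta_k+R_k$ is valid, but state the two facts it silently uses: no transposition can be a discovery for two $a$'s or for two $c$'s (the partner always moves through the gate in its non-compulsory direction, and $b$'s have no compulsory gates), so double--discoveries are exactly the transpositions that are discoveries for one $a$ and one $c$, and every discovery transposition is $(\le k)$--critical. Your combinatorial identity $\sum_i\min\{i,|T(a_i)|\}+\sum_j\min\{j,|T(c_j)|\}=\binom{k+1}{2}$ is also correct, but the indicator expansion works only because the $a$'s and the $c$'s become free in increasing order (no confined transpositions), so that $T(a_i)$ is always a top segment $\{c_{k-|T(a_i)|+1},\ldots,c_k\}$; say so explicitly. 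The forward direction then needs only the observation that each double--discovery in the $C$--Zone involves exactly one $a$ and each one in the $A$--Zone exactly one $c$, which you have.

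The one load--bearing step you outsource is the per-element bound that at most $\min\{i,|T(a_i)|\}$ double--discoveries involve $a_i$ in the $C$--Zone (and symmetrically for $c_j$); the whole converse hinges on it, so do not merely assert that Lov\'asz et al.\ ``in fact establish'' it --- prove it, which takes three lines: each such transposition is a first-time entry of $a_i$ through one of its $i$ compulsory $C$--gates, giving the bound $i$; at the moment of such a swap $a_i$ is already free while the partner $c_j$, which is exiting a $C$--gate for the first time, is not yet free (or becomes free at that very swap), so $c_j\in T(a_i)$; and distinct double--discoveries of $a_i$ involve distinct $c$'s because each pair transposes exactly once in a half-period, giving the bound $|T(a_i)|$. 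Finally, a scope caveat: perfectness is defined relative to a fixed $k$, and your equivalence is at that single $k$, whereas the stated forward clause claims the count $3\binom{k+1}{2}$ for \emph{all} $k\le m$; your argument does not address the smaller values of $k$ (zones, gates and labels all change with $k$). Your converse is correspondingly stronger than the stated one (it needs the count only at the top level), and the single-$k$ equivalence is all the paper ever invokes (e.g.\ in the proof of Proposition~\ref{prop:3decomp}), so this is a mismatch with the literal statement rather than a flaw in the mathematics, but it should be flagged if you present the proof as a proof of the proposition as written.
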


\section{Proof of Main Theorem}\label{sec:mainthm}

The concept of $3$-decomposability for $n$-point sets is also generalized in the setting of allowable sequences. An $n$--half-period $\Pi$ of an allowable sequence $\bf\Pi$ is {\em 3--decomposable} if  the elements  in $\Pi$ can be labeled $A = \{a_{n/3}, a_{n/3-1}$, $\ldots, a_1\}$,  $B = \{b_{1}, b_{2}, \ldots, b_{n/3}\}$, $C = \{c_1,c_{2}, \ldots, c_{n/3}\}$ and if $\pi_0 = (a_{n/3}, a_{n/3-1}, \ldots, a_1$, $b_1,b_{2}, \ldots, b_{n/3}$, $c_1, c_{2}, \ldots, c_{n/3})$ is the first permutation of $\Pi$, thus, all transpositions between an element of $A$ and an element of $B$ occur before that the transpositions between $C$ and $A \cup B$, after occur all transposition between $A$ and $C$ prior that the transposition between $B$ and $ C$ and later occur all transposition between $C$ and $B$. In particular, there are some indices $0 < s < t < \binom{n}{2}$, such that $\pi_{s+1}$ shows all the $b$-elements followed by all the $a$-elements followed by all de $c$-elements, and $\pi_{t+1}$ shows all the $b$-elements followed by all the $c$-elements followed by all the $a$-elements. An  allowable sequence is {\em 3--decomposable} if contains an $n$--half-period $3$--decomposable.

Before proving the Main Theorem, we must first state two propositions:

\begin{prop}\label{prop:libseq}
Suppose that $\Pi$ is perfect. Then, in the liberation sequence
$\sigma$ of $\Pi$, either all the $a$'s occur consecutively or
all the $c$'s occur consecutively.
\end{prop}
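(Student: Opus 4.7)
My plan is to argue by contradiction: assume $\Pi$ is perfect but $\sigma$ has at least four blocks, so that neither all $a$'s nor all $c$'s are consecutive. By the $A \leftrightarrow C$ symmetry of the setup I may assume $\sigma$ begins with an $a$-block; write its blocks as $A_1, C_1, A_2, C_2, \ldots$ with $A_1 = \{a_1, \ldots, a_p\}$, $C_1 = \{c_1, \ldots, c_q\}$, $A_2 = \{a_{p+1}, \ldots\}$, $C_2 = \{c_{q+1}, \ldots\}$. Since there are at least four blocks, both $A_2$ and $C_2$ are nonempty.

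The first step is to extract the exact double-discovery structure implied by perfect. Combining the count of $3\binom{k+1}{2}$ $(\le k)$-critical transpositions with Observation~\ref{obs:disctrans} and condition (a) forces the total number of double-discoveries to be exactly $\binom{k+1}{2}$, so conditions (b) and (c) are tight. In particular, every $c_j \in C_1$ realizes exactly $\min\{j,|T(c_j)|\}$ doubles in the $A$-zone; each such double at the $r$-th $A$-gate pairs $c_j$'s first entry through that gate with some $a_i$'s first exit of it, where $a_i \in T(c_j)$ appears later than $c_j$ in $\sigma$. For the $1$st $A$-gate this pins the paired event to the becoming-free moment $t_i$ of $a_i$.

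The second step is to track motions inside the $A$-zone using (a). Once a $c$-element has entered the $A$-zone, any subsequent rightward motion of it must coincide either with a first-entry of the relevant $A$-gate by another $c$-element or with a first-exit of that gate by an $a$-element, since otherwise the transposition would be nobody's discovery. Together with the fact that the final permutation arranges the $c$'s in the $A$-zone as $c_k, c_{k-1}, \ldots, c_1$ from left to right, this forces an order on the first-entries into the $1$st $A$-gate: to avoid unrecoverable displacements, higher-indexed $c$'s must enter and settle first, and any deviation can only be undone by a subsequent $a$-first-exit.

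The contradiction will then follow from picking a $c_j \in C_1$ whose $1$st-$A$-gate double is forced at some time $t_i$ with $a_i \in A_2 \cup \ldots$: this places $c_j$ inside the $A$-zone strictly before any $c_{j'} \in C_2 \cup \ldots$ is even free, so $c_j$ enters earlier than every higher-indexed $c_{j'}$. But some such $c_{j'}$ must eventually enter, by which time all $a$'s in $A_2 \cup \ldots$ have become free; its entry will push $c_j$ out of the $A$-zone with no remaining $a$-first-exit available to bring $c_j$ back to its final position, violating (a) or the final arrangement. The main obstacle will be making the ``cannot return once displaced'' step airtight when $\sigma$ has more than four blocks, since later $a$-blocks could in principle supply extra first-exits; the cleanest route I foresee is to isolate a minimal obstructing subpattern and to exploit the $A \leftrightarrow C$ symmetry of the perfect condition.
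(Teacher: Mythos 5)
Your overall strategy---use perfectness to force every transposition in the $A$-- or $C$--Zone to be a discovery, and then exhibit a displaced element that cannot regain its final position without a non-discovery swap---is the same mechanism the paper uses, but the two steps that actually carry the argument are missing. First, the element you want to exploit is not shown to exist: condition (c) of perfectness only fixes the \emph{number} $\min\{j,|T(c_j)|\}$ of double--discoveries of $c_j$ in the $A$--Zone; it says nothing about which gates they occur at, so nothing guarantees that some $c_j\in C_1$ has a double at the first $A$--gate, let alone one whose partner $a_i$ lies in a later block $A_2\cup\cdots$. (A smaller imprecision in your Step 2: if a $c$ moves right in the $A$--Zone its partner moves left, so the partner cannot be making a first \emph{exit}; the only admissible case is a first entry of that gate by another $c$.)

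Second, the ``cannot return once displaced'' step, which you yourself flag as the main obstacle, is exactly where all the work lies, and your sketch supplies no mechanism for it: the entry of a later $c_{j'}$ into the $A$--Zone does not by itself push $c_j$ out (it swaps with whatever is adjacent, one position at a time), and ``no remaining $a$--first-exit available'' is asserted, not proved. The paper closes this gap with a counting argument that your prefix-based block decomposition does not support: it works with a suffix $a_{t-1}c_\ell\cdots c_k a_t\cdots a_k$ of the liberation sequence, uses that every $a_j$ with $j\ge t$ is freed after $c_k$ (hence still confined and stationary, which pins down what sits next to the displaced element), shows $|T(a_{t-1})|=k-\ell+1>t-1$ so that $a_{t-1}$ has exactly $t-1$ doubles in the $C$--Zone, bounds the rightmost position $r$ it can reach, and then compares the $2k+m-r$ elements it still must cross with the at most $k+m+t-1-r$ discovery transpositions it has left; this forces a retreat to the $B$--Zone and a re-entry into the $C$--Zone via a transposition that is a discovery for neither participant, the desired contradiction. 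Without an analogous quantitative comparison, and a structural substitute for the suffix-based facts above, your outline is a plan rather than a proof.
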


\begin{proof}
The last entry in $\sigma$ is either $a_k$ or
$c_k$, and by symmetry
we may assume without any loss of generality that it
is $a_k$.  Our strategy
is to suppose that  $a_{t-1} c_\ell
c_{\ell+1} \cdots c_k a_t \cdots a_k$ is a suffix of $\sigma$,
where $\ell > 1$ and $2 \le t \le k$, and derive a contradiction.

We claim that $a_{t-1}$ swaps with $c_k$ in the $C$--Zone.  We start
by noting that since $\Pi$ is perfect, and $|T(a_{t-1})| = k - \ell +1
\ge 1$, it follows that $a_{t-1}$ is involved in a double--discovery
transposition in the $C$--Zone with at least one $c$. If this
transposition involves ($a_{t-1}$ and) $c_k$, then our claim obviously
holds. Thus suppose that it involves ($a_{t-1}$ and) $c_i$ for some
$i< k$. Then, right after $a_{t-1}$ and $c_i$ swap, $c_k$ is to the
right of $a_{t-1}$, since no confined transpositions occur in
$\Pi$. Note that all transpositions that swap $a_{t-1}$ to the left
involve an $a_j$ with $j > t-1$. On the other hand, since $a_t$
(moreover, every $a_j$ with $j \ge t$) gets freed after $c_k$, it
follows that before any transposition can move $a_{t-1}$ left, $c_k$
must be freed (and before that it must transpose with $a_{t-1}$). This shows that the
transposition $\mu$ that swaps $a_{t-1}$ with $c_k$ occurs in the
$C$--Zone.

Thus, right after $\mu$ occurs, $a_{t-1}$ is at position
$r$, where $r \ge k+m+1$. We claim that $\max\{r,k+m+t-1\} <
2k+m$. Since $t-1 < k$, then $k+m+t-1 < 2k+m$, and so it suffices to
show that if $r > k+m+t-1$, then $r < 2k+m$.  So suppose that $r >
k+m+t-1$. Note that the final position in $\Pi$ (that is, the position in
$\pi_{\binom{n}{2}}$) of $a_{t-1}$ is $k+m+t-1$, and so by the time
$\mu$ occurs there has been a
transposition $\tau$ that moves $a_{t-1}$ to the right of
its final position (we remark that possibly $\tau = \mu$). Since $\tau$
occurs in the $C$--Zone and clearly is not a discovery step for
$a_{t-1}$, and $\Pi$ is perfect, it follows that $\tau$ is a discovery step for a
$c_i$. Moreover,  $|T(a_{t-1})| = k - \ell+1$ is
greater than $t-1$, as otherwise (by the perfectness of $\Pi$) the
transposition between $a_{t-1}$ and $c_i$ would have to be
a double--discovery step. Thus $|T(a_{t-1})| > t-1$, and again
invoking the perfectness of $\Pi$ we get that $a_{t-1}$ is involved
with (exactly) $t-1$ double--discovery steps in the $C$--Zone,
each with an element in
$\{c_\ell,\ldots,c_{k}\}$. Therefore the number of
possible transpositions that move $a_{t-1}$ to the right of its final position
$k+m+t-1$ is at most $k-\ell+1-(t-1)$.  Thus the rightmost position of
$a_{t-1}$ throughout $\Pi$ (and consequently $r$) is at most $k+m+t-1 + k-\ell+1-(t-1)
= 2k+m +1 -\ell < 2k+m$.

Let $R$ be the set of the points that occupy the positions $r+1, r+2,
\ldots, 2k+m$ immediately after $\mu$ occurs.  Since at this time
every $a_j$ with $j > t-1$ is confined, it follows that each point in
$R$ is either a $b$, a free $c$ (this follows easily since there are
no confined transpositions, and $a_{t-1}$ reached the position $r$ by
transposing with $c_k$), or an $a_j$ with $j < t-1$.  In
particular, each element in $R$ still has to transpose with $a_{t-1}$.

We claim that  $a_{t-1}$ must move back to the $B$--Zone (after
$\mu$ occurs).  Seeking a contradiction, suppose that
$a_{t-1}$ does not go back to the $B$--Zone.  We claim that then there is a transposition $\rho$ of $a_{t-1}$ with an
element in $R$ that is not a discovery transposition.  The key observation is
that then at most $k+m+t-1-r$ transpositions of $a_{t-1}$ with
elements of $R$ can be discovery transpositions. In order to prove
this assertion, first we note that no transposition of $a_{t-1}$ with
an element in $R$ can be discovery transposition for the element in
$R$ (recall that each element in $R$ is either a $b$, a free $c$, or
an $a_j$ with $j < t-1$), so if such a transposition is a discovery
one, it is so for $a_{t-1}$ (recall we assume that $a_{t-1}$ does not
go back to the $B$--Zone). But once $a_{t-1}$ has reached $r$, it has
at most $k+m+t-1-r$ discovery transpositions to do (since the rightmost
compulsory entry--gate for $a_{t-1}$ is the $(t-1)$st $C$--gate). Now
since $R$ has $2k+m-r$ elements, and $2k+m-r > k+m+t-1-r$, it follows
that there is at least one transposition $\rho$ of $a_{t-1}$ with an
element of $R$ that is {not} a discovery transposition, as
claimed. But the perfectness of $\Pi$ implies that such a
transposition must occur in the $B$--Zone, contradicting (precisely)
our assumption that $a_{t-1}$ did not move back to the $B$--Zone.

Thus, after $\mu$ occurs, $a_{t-1}$
eventually re-enters the $B$--Zone, and since its final
position is $k+m+t-1$, afterwards it has to re-enter the $C$--Zone via a
transposition $\lambda$ that moves $a_{t-1}$ to the right and an
element $x\in R$ to the left. Since $\lambda$ occurs in the $C$--Zone,
and $\Pi$ is perfect, then $\lambda$ must be a discovery
transposition. We complete the proof by arriving to a contradiction:
$\lambda$ {\em cannot} be a discovery transposition.  Indeed,
$\lambda$ cannot be discovery for $a_{t-1}$ (since it had already been
in the $C$--Zone), so it must be a discovery step for $x$. On the
other hand, since each $x\in R$ is either a $b$, a free $c$, or an
$a_j$ with $j < t-1$, $\lambda$ it follows that $\lambda$ cannot be a
discovery transposition for $x$ either.
\end{proof}

Our next statement shows that we can actually go a bit further: there is a perfect $n$--half-period $\Pi'$ whose liberation sequence has all $a$'s followed by all $c$'s or vice versa.

\begin{prop}\label{prop:3decomp}
Suppose that $\Pi$ is a perfect $n$--half-period of an allowable sequence $\bf \Pi$. Then $\bf \Pi$ contains a perfect $n$--half-period $\Pi'$, with initial permutation $a_k' a_{k-1}' \ldots a_1'$ $b_1' \ldots b_m'$ $c_1' c_2' \ldots c_k'$, and whose liberation sequence is either $a_1' a_{2}' \ldots a_k' c_1' c_2' \ldots  c_k'$ or $c_1' c_{2}' \ldots c_k' a_1' a_2' \ldots  a_k'$.
\end{prop}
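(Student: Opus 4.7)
The plan is to invoke Proposition~\ref{prop:libseq} and then construct $\Pi'$ by shifting the starting point of the half-period within $\bf\Pi$. By Proposition~\ref{prop:libseq}, after possibly interchanging the roles of $a$'s and $c$'s, the liberation sequence of $\Pi$ has the form
$$
\sigma(\Pi) = c_1 c_2 \cdots c_p \, a_1 a_2 \cdots a_k \, c_{p+1} \cdots c_k
$$
for some $0 \le p \le k$. If $p=0$ or $p=k$, simply take $\Pi' := \Pi$ and we are done. Otherwise $1 \le p \le k-1$, and I would let $r$ be the index in $\Pi$ immediately after the transposition that liberates $c_p$ and define $\Pi_1 := (\pi_r, \pi_{r+1}, \ldots, \pi_{r+\binom{n}{2}})$, relabeling its elements by their positions in $\pi_r$. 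Since $a_1, \ldots, a_k$ are still confined at index $r$, they occupy positions $1,\ldots,k$ of $\pi_r$ and retain their labels ($a_i' = a_i$). Since $c_{p+1}, \ldots, c_k$ have not moved in $\Pi$ up to index $r$, they occupy the rightmost $k-p$ positions of $\pi_r$ and become $c_{p+1}', \ldots, c_k'$; the remaining $p$ slots of the new $C'$-region are filled by $b$'s and by the already-freed $c_1,\ldots,c_p$, which become $c_1',\ldots,c_p'$.

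To justify perfectness of $\Pi_1$, I would appeal to the fact that the number of $i$-critical transpositions in an $n$-half-period of $\bf\Pi$ is an invariant of the underlying generalized configuration, independent of which half-period is chosen; hence $\Pi_1$ has exactly $3\binom{k+1}{2}$ $(\le k)$-critical transpositions for every $k \le m$. After applying Proposition~\ref{prop:noconf} if necessary to remove confined transpositions while preserving this count, the converse direction of Proposition~\ref{prop:perfect} yields that $\Pi_1$ is perfect.

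The main obstacle is analyzing $\sigma(\Pi_1)$ and showing it has the advertised form. Because $a_i' = a_i$ and the transpositions of $\Pi_1$ coincide with those of $\Pi$ from index $r$ onward, the liberations of $a_1', \ldots, a_k'$ in $\Pi_1$ are exactly the liberations of $a_1, \ldots, a_k$ in $\Pi$, in that order. What must be ruled out is that some $c_i'$ is liberated in $\Pi_1$ before all of $a_1',\ldots,a_k'$. Such an event would require a transposition at positions $(n-k, n-k+1)$ during the interval in $\Pi$ between $c_p$'s liberation and $a_k$'s liberation, whose departing element (moving from $n-k+1$ to $n-k$) matches the specific element labeled $c_i'$. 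Using condition (a) of perfectness (every $C$-Zone transposition is a discovery step) together with the order of liberations in $\sigma(\Pi)$, every such transposition in this interval must have an already-freed $c_j$ ($j\le p$) as its departing element and some already-freed $a_j$ entering its first compulsory $C$-entry-gate as its companion; conditions (b) and (c) bound the number of such re-entries and force the corresponding departing element in each case to be distinct from every $c_i'$. A careful bookkeeping along these lines confirms that no $c_i'$ reaches its first $C'$-exit in $\Pi_1$ before all $a'$-liberations are complete. Since the $c$'s must appear in $\sigma(\Pi_1)$ in increasing-index order, this yields $\sigma(\Pi_1) = a_1' a_2' \cdots a_k' c_1' c_2' \cdots c_k'$, as required.
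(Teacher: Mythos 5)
Your skeleton is the same as the paper's (normalize the liberation sequence via Proposition~\ref{prop:libseq}, restart the half-period where the initial stub of liberations ends, relabel by position, get the $(\le k)$-critical count by invariance, then analyze the new liberation sequence), but each place where you compress the argument is exactly where the real work lies, and one of your patches is unsound. First, ``$c_{p+1},\ldots,c_k$ have not moved in $\Pi$ up to index $r$'' is not automatic: a confined $c$ can be displaced inside the $C$--Zone by a $b$ or an already-freed $c$, since each such displacement is a first-time exit of that confined $c$ through a higher $C$--gate and hence a legitimate discovery step under condition (a). Ruling this out needs the double-discovery counting argument; this is the paper's fact (A), not an observation one may assert in passing. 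Second, you cannot invoke Proposition~\ref{prop:noconf} to remove confined transpositions from $\Pi_1$: that proposition only supplies \emph{some} half-period with the same critical count, not necessarily a half-period of the same allowable sequence ${\bf \Pi}$ (which the statement of Proposition~\ref{prop:3decomp} requires), and the replacement would in any case destroy the initial permutation and the liberation sequence you have just analyzed. You must show directly that $\Pi_1$ has no confined transpositions \emph{with respect to the new labels}; since the new $C'$-block contains $b$'s and freed $c$'s, this is not vacuous, and it is also what you silently use when you say ``the $c$'s must appear in $\sigma(\Pi_1)$ in increasing-index order.'' This is the paper's fact (C).

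Third, and most seriously, the step you defer to ``careful bookkeeping'' is the heart of the proposition. What must be shown is that no element occupying the $C$--Zone at time $r$ leaves it before the last $a$ is freed; equivalently, that no transposition at the first $C$--gate occurs at all in that interval, because the first such transposition would necessarily eject the element sitting at position $k+m+1$, which \emph{is} one of your $c_i'$. Your sketch misstates the situation: the departing element need not be an already-freed $c_j$ (it can be a $b$), and ``forcing the departing element to be distinct from every $c_i'$'' is neither what happens nor what would suffice. Conditions (a)--(c) do not locally forbid a freed $a_j$ from making its first (discovery) entrance through the first $C$--gate before $a_k$ is liberated; excluding this requires tracking such an intruder against its compulsory gates, its final position, and the set of elements it still has to cross---precisely the argument of the paper's fact (B), in the spirit of the proof of Proposition~\ref{prop:libseq}. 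Until you supply (A)-, (B)- and (C)-type lemmas for your starting permutation $\pi_r$, the proof is incomplete. (Your choice of $r$ just after $c_p$ is freed, rather than just before the first big-block liberation, makes no essential difference---neither extreme zone changes in between---but the latter choice, which the paper makes, renders fact (A) cleaner because no $c$ at all has been freed yet.)
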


\begin{proof}
Let $\Pi=(\pi_0, \pi_1, \ldots,
\pi_{\binom{n}{2}})$ be any perfect $n$--half-period, with initial
permutation $\pi_0=(a_k a_{k-1} \ldots a_1$ $ b_1 \ldots b_m c_1 c_2
\ldots c_k)$, and let  $\sigma$ be the liberation sequence associated to
$\Pi$. Thus the last entry of $\sigma$ is either $a_k$ or $c_k$, and a
straightforward symmetry argument shows that we may assume without
loss of generality that last entry in $\sigma$ is $a_k$. If $\sigma$
is $c_1 c_2 \ldots c_k a_1 a_2 \ldots a_k$, then we are done. Thus we
may assume that there is a $t$, $2 \le t \le k$, such that
$a_{t-1},c_{1},c_{2},\ldots,c_k,a_{t},a_{t+1},\ldots,a_k$ is a suffix
of $\sigma$.

In order to define the $n$--half-period $\Pi'$ claimed by the
proposition, we establish some facts regarding $\Pi$.

\begin{description}

\item{(A)} {\sl
Let $\pi_{i+1}$ be the permutation where $c_1$
becomes free. Then $\pi_{i}$ is of the form
$(a_k, a_{k-1}, \ldots ,a_{t},$ $d_{1},$ $d_{2},$
$\ldots,d_{p} c_1, c_2, \ldots c_k $) where
$p=t-1+m$ and each $d_{j}$ is either a $b$ or a free $a$.}

\end{description}

\noindent{\em Proof of } (A).
The perfectness of $\Pi$ readily
implies that every transposition in the $A$--Zone that involves an
element in $L:=\{a_t, a_{t+1}, \ldots, a_k\}$ is a double--discovery
transposition.  In particular, the first element that moves an element
in $L$ must involve a $c$. Therefore, as long as no $c$ becomes
free, all the elements in $L$ must stay in their original
position. Finally, we observe that when  $c_1$ becomes free,
$a_1, a_2, \ldots, a_{t-1}$ are already free, so each $d_j$ is either
a $b$ or a free $a$, as claimed.

\begin{description}

\item{(B)} {\sl No element in $\{a_k a_{k-1} \ldots a_t d_1, \ldots,
  d_{t-1}\}$ (these are the elements that are in the $A$--Zone, in the
  given order, in $\pi_i$) leaves the $A$--Zone before $c_k$ becomes
  free.}

\end{description}

\noindent{\em Proof of } (B).  Seeking a contradiction, let $e$ be the
  first element in $\{a_k a_{k-1} \ldots a_t d_1, \ldots, d_{t-1}\}$
  that moves out of the $A$--Zone before $c_k$ becomes free. The
  perfectness of $\Pi$ readily implies that the element that takes $e$
  out of the $A$--Zone is some $c_j$ (where by assumption $j \neq k$).
  Now right after $c_j$ swaps with $e$, $c_j$ and $c_k$ are in the
  $A$-- and $C$--Zones, respectively.  In particular, at this point
  $c_j$ and $c_k$ have not swapped. Now as we observed above, every
  transposition in the $A$--Zone involving an element in $L$ is
  double--discovery, and so it follows that $c_j$ never gets beyond
  (to the left of) the position $k-j+1$.  No matter where the $(c_j,c_k)
  \mapsto (c_k,c_j)$
  transposition occurs, this implies that $c_j$ must at some point be
  in a position $r$,  with $k-j+1 \le r \le k$, and then
move (right) to position $r+1$.
  Now in order to reach its final position, $c_j$ must eventually move
  back to the position $r$, via some transposition
  $\varepsilon = (x,c_j) \mapsto (c_j,x)$.  Since
  $\Pi$ is perfect, and $\varepsilon$ occurs in the $A$--Zone,
  $\varepsilon$ is a discovery transposition. But it clearly cannot be
  discovery for $c_j$, since $c_j$ is re-visiting the position $r$.  Now
  $x \in \{ a_k, a_{k-1}, \ldots, a_t, d_1, \ldots,d_{t-1} \}$, since
  these were the elements to the left of $c_j$ when it
  first entered the $A$--Zone.  Clearly $x$ cannot be a $d$, since
  each $d$ is either a $b$ or a free $a$, and $\varepsilon$ must
  be discovery for $x$. Thus $x$ must be in $L=\{a_k,a_{k-1},\ldots,
  a_t\}$.  But this is also impossible, since (see
  Proof of (A)) every transposition that involves an element in $L$
  must be a double--discovery transposition.

\begin{description}

\item{(C)} {\sl Suppose that two elements that are in the $A$--Zone
  (respectively, $C$--Zone)
  in $\pi_i$ transpose with each other in
  the $A$--Zone   (respectively, $C$--Zone) after $\pi_i$.  Then at least
  one of these elements leaves the $A$--Zone   (respectively, $C$--Zone) after $\pi_i$ and before
  this transposition occurs.}
\end{description}

\noindent{\em Proof of} (C).
First we note that the elements that are in the $C$--Zone in $\pi_i$
are $c_1, c_2, \ldots, c_k$, in this order, and that if two of them
transpose before at least one of them leaves the $C$--Zone, this
transposition would be confined, contradicting the assumption that
$\Pi$ is perfect.  That takes care of the $C$--Zone part of (C).

Now we recall that the elements that are in the $A$--Zone in $\pi_i$ are
$a_k, a_{k-1}, \ldots, a_t, d_1, d_2, \ldots, d_{t-1}$, in this order.
Suppose that two such elements transpose in the $A$--Zone after
$\pi_i$, and that between $\pi_i$ and this transposition (call it
$\lambda$) none of them
leaves the $A$--Zone.  It follows from the perfectness of $\Pi$ that,
for each $a_j$, every move of $a_j$ until it leaves the $A$--Zone
must involve some $c_\ell$. Thus none of the elements involved in
$\lambda$ can be an $a_j$, that is, both must be $d_j$'s.  But such a
transposition would clearly not be discovery (recall that each $d$ is
a free $a$ or a $b$), contradicting the
perfectness of $\Pi$.  This completes the proof of (C).

\begin{description}

\item{(D)} {\sl After $\pi_i$, the elements in the $A$--Zone leave it
  in the order $d_{t-1}, d_{t-2}, \ldots, d_1, a_t, \ldots, a_{k-1},
  a_k$, and the elements in the $C$--Zone leave it in the order $c_1,
  c_2, \ldots, c_k$.}

\end{description}

\noindent{\em Proof of} (D).  This is an immediate corollary of (C).

Now define $\Pi':=(\pi_i, \pi_{i+1} ,\ldots, \pi_{\binom{n}{2}}) =
(\pi_0^{-1}, \pi_1^{-1}, \ldots, \pi_{i-1}^{-1}, \pi_{i}^{-1})$.  It is
straightforward to check that $\Pi'$ is an $n$--half-period.  Define
the relabeling $a_i \mapsto a_i'$ for $i = t, t+1, \ldots, k$; $d_s
\mapsto a_{t-s}'$ for $s=1,\ldots, t-1$; $d_s \mapsto b_{s-t+1}'$ for $s =
t, t+1, \ldots, p$; and $c_i \mapsto c_i'$ for $i=1,\ldots, k$, so
that the initial permutation of $\Pi'$ (namely
$\pi_i = (a_k a_{k-1}
\ldots a_{t} d_{1} d_{2}$ $\ldots,d_{p} c_1 c_2 \ldots c_k)$)
 is $ (a_k' a_{k-1}' \ldots a_1'b_1' b_2' \ldots b_m' c_1' c_2' \ldots
c_k') $.

To complete the proof, we check that (i) the liberation sequence
of $\Pi'$ is $c_1' c_{2}' \ldots c_k' a_1' a_2' \ldots a_k'$; and that
(ii) $\Pi'$ is perfect.  We note that (i) follows immediately from (B)
and (D). Now
in view of Proposition~\ref{prop:perfect}, in order to prove that
$\Pi'$ is perfect it suffices to show that it has no confined
transpositions, and that it has exactly
$3\binom{k+1}{2}$ $(\le
k)$--critical transpositions. From (C) it follows that $\Pi'$ has no confined
transpositions. On the other hand, an application of
Proposition~\ref{prop:perfect}
to $\Pi$ (which is perfect) yields that $\Pi$ has $3\binom{k+1}{2}$
$(\le k)$--critical transpositions.  The construction of $\Pi'$
clearly reveals that $\Pi$ and $\Pi'$ have the same number of $(\le
k)$--critical transpositions, and so $\Pi'$ has $3\binom{k+1}{2}$
$(\le k)$--critical transpositions, as required.
\end{proof}

\noindent{\em Proof of Theorem~\ref{thm:main}}

Let $\Pi$ be an $n$--half-period of $\bf\Pi_P$, for $n$ a multiple of $3$. By the hypothesis  of the Main Theorem and the fact
$E_{\le k-1}(P)=N_{\leq k}(\Pi)$, we have $N_{\leq k}(\Pi)=3\binom{k+1}{2}$ for each $1\le k\le n/3$.
This equality and  Proposition~\ref{prop:noconf}  guarantee that $\bf\Pi_P$ contains an $n$--half-period, say $\Pi_P$, that satisfies  the hypothesis of Proposition~\ref{prop:perfect}. Thus  $\Pi_P$ is perfect, and   using Proposition~\ref{prop:3decomp} we get an $n$--half-period which behaves as we need for
$\bf\Pi_P$ to be 3--decomposable. \qed

\section{On Allowable Sequences That Minimize The Crossing-Number of \texorpdfstring{$K_{30}$}{K30}}\label{sec:k30}

This section is devoted to study of allowable sequences which come from configurations of $30$ points that minimize the crossing-number. 
In particular, each result presented in this section is focused on establish features of such sequences. Later, in Section~\ref{sec:proofk30}, each of these properties will be used in the proof of Theorem~\ref{thm:second}.

We begin by proving, with the help of Theorem~\ref{thm:main}, that all optimal sequence of $K_{30}$ are $3$-decomposable.

We have the following bounds given by \'Abrego et al.~\cite{Ab:hallin} for any $n$--half-period {\bf $\Pi$} of an allowable sequence.
\begin{equation}\label{eq:hallin}
N_{ \lfloor n/2 \rfloor } (\Pi) \leq
\begin{cases}
\lfloor \frac{1}{2}\binom{n}{2} - \frac{1}{2}N_{\leq \lfloor n/2 \rfloor -2}(\Pi) \rfloor, & \text{ if } n \text{ is even},\\
\lfloor \frac{2}{3}\binom{n}{2} - \frac{2}{3}N_{\leq \lfloor n/2 \rfloor -2}(\Pi) + \frac{1}{3} \rfloor, & \text{ if } n \text{ is odd}.
\end{cases}
\end{equation}
and
\begin{equation}\label{eq:nhallin}
N_{\leq \lfloor n/2 \rfloor -1} (\Pi) \geq
\begin{cases}
\binom{n}{2}- \lfloor \frac{1}{24}n(n+30) - 3 \rfloor, & \text{ if } n \text{ is even},\\
\binom{n}{2}- \lfloor \frac{1}{18}(n-3)(n+45) +\frac{1}{9} \rfloor, & \text{ if } n \text{ is odd}.
\end{cases}
\end{equation}

Now, if $\Pi$ is a $30$--half-period associated to a generalized configuration $P$ of $30$ points,  then from~(\ref{eq:hallin}) we know that $N_{15}(\Pi) \leq 72$ and if we combine~(\ref{eq:edg}) and~(\ref{eq:nhallin}) we get that $N_{14}(\Pi) \geq 72$. With this bounds in~(\ref{eq:credg}) we have $9723$ as a lower bound for $\pcr(K_{30})$. Moreover, if for some $k = 0, \ldots, 12$,~(\ref{eq:edg}) is not tight, then a simple calculation in (\ref{eq:credg}) shows that $\widetilde{cr}(P) \geq 9727$ and therefore $P$ will be worse than the best known configuration given implicitly by Aichholzer and Krasser in~\cite{AiKr}, which establishes $9726$ as an upper bound. Besides $72 \leq N_{14}(\Pi) \leq 75$ or $\pcr(P) \geq 9727$. So, in an optimal configuration with $30$ points,~(\ref{eq:edg}) must be tight for each $k = 0, \ldots, 12$ and so, by  the Main theorem, $P$ is $3$--decomposable.

For the remainder of this subsection, let us assume that $\Pi = (\pi_0, \pi_1, \ldots, \pi_{\binom{30}{2}})$ is a $3$--decomposable $30$--half-period, with initial permutation $\pi_0 = (a_{10}, a_9, \ldots, a_1, b_1, b_2, \ldots, b_{10}, c_1, c_2, \ldots, c_{10})$ and $A = \{a_{10}, a_9\-,\ldots,a_1\}$,  $B = \{b_1, b_2, \ldots, b_{10}\}$ and $C = \{c_1, c_2, \ldots, c_{10}\}$.

In order to count the number of $(\leq k)$--critical transposition in $\Pi$, we define two types of transpositions. A transposition is {\em monochromatic} if it occurs between two elements of the same set $A$, $B$ or $C$, otherwise is called {\em bichromatic}. We denote  the number of monochromatic (respectively, bichromatic) $(\leq k)$--critical transpositions in $\Pi$ by $N_{\leq k}^{mono}(\Pi)$ (respectively, $N_{\leq k}^{bi}(\Pi))$. Note that $N_{\leq k}(\Pi) = N_{\leq k}^{mono}(\Pi) + N_{\leq k}^{bi}(\Pi)$.

From~\cite{Ab:3sim} we get the next account for bichromatic transpositions on a $3$--decomposable $n$--half-period $\Pi'$:
\begin{equation}\label{eq:bicro}
N_{\leq k}^{bi}(\Pi') =
\begin{cases}
3\binom{k+1}{2} & \text{ if } k \leq n/3,\\
3\binom{n/3 + 1}{2} + (k - n/3)n & \text{ if } n/3 < k < n/2.
\end{cases}
\end{equation}

As a consequence of (\ref{eq:bicro}) we have the next two Corollaries:

\begin{cor}\label{cor:bi1}
$N_k^{bi}(\Pi) = 3k$ for $k = 1, 2, \ldots, 10$.
\end{cor}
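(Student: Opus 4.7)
The plan is to derive the claim by a direct telescoping from equation~(\ref{eq:bicro}). With $n=30$, the first branch of~(\ref{eq:bicro}) applies for all $k\le 10$, so I would apply it to both $k$ and $k-1$ (both of which satisfy $k,k-1\le 10 = n/3$), giving $N^{bi}_{\le k}(\Pi)=3\binom{k+1}{2}$ and $N^{bi}_{\le k-1}(\Pi)=3\binom{k}{2}$.

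Since by definition $N_k^{bi}(\Pi)=N^{bi}_{\le k}(\Pi)-N^{bi}_{\le k-1}(\Pi)$, the computation reduces to
\[
N_k^{bi}(\Pi)\;=\;3\binom{k+1}{2}-3\binom{k}{2}\;=\;\tfrac{3}{2}\bigl((k+1)k-k(k-1)\bigr)\;=\;3k,
\]
valid for $k=1,2,\ldots,10$. For the base case $k=1$, we can either include $N^{bi}_{\le 0}(\Pi)=3\binom{1}{2}=0$ by the same formula, or note directly that the only bichromatic $1$-critical (i.e.\ $0$-edge) transpositions on the boundary of the 3-decomposable configuration give $N_1^{bi}(\Pi)=3$, matching the formula.

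There is no real obstacle here: the statement is a one-line consequence of~(\ref{eq:bicro}) once one observes that the range $k\le 10$ falls entirely within the regime $k\le n/3$ where the first branch of~(\ref{eq:bicro}) is in force. The only thing to be a bit careful about is to make sure both $k$ and $k-1$ lie in that regime, which they do throughout $1\le k\le 10$.
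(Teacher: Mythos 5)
Your proposal is correct and matches the paper's intent: the corollary is stated there as an immediate consequence of equation~(\ref{eq:bicro}), and the telescoping $N_k^{bi}(\Pi)=N_{\le k}^{bi}(\Pi)-N_{\le k-1}^{bi}(\Pi)=3\binom{k+1}{2}-3\binom{k}{2}=3k$, with both $k$ and $k-1$ in the regime $k\le n/3=10$, is exactly the computation the authors leave implicit.
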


\begin{cor}\label{cor:bi2}
$N_k^{bi}(\Pi) = 30$ for $k = 11, 12, 13, 14$.
\end{cor}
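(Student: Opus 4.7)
The plan is to derive Corollary~\ref{cor:bi2} as an immediate telescoping consequence of the closed formula (\ref{eq:bicro}) applied to $n=30$. First I would observe that for $n=30$ we have $n/3=10$, so the range $11 \le k \le 14$ falls entirely within the regime $n/3 < k < n/2$ of (\ref{eq:bicro}). In this regime the formula specializes to
$$
N_{\le k}^{bi}(\Pi) \;=\; 3\binom{11}{2} + 30\,(k-10) \;=\; 165 + 30(k-10).
$$

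Next I would extract $N_k^{bi}(\Pi)$ using the identity $N_k^{bi}(\Pi) = N_{\le k}^{bi}(\Pi) - N_{\le k-1}^{bi}(\Pi)$ and compute directly. For $k \in \{12,13,14\}$, both $k$ and $k-1$ lie in the second regime of (\ref{eq:bicro}), so the difference telescopes to $30(k-10) - 30(k-1-10) = 30$. For the boundary case $k=11$, one uses the first case of (\ref{eq:bicro}) to evaluate $N_{\le 10}^{bi}(\Pi) = 3\binom{11}{2} = 165$ and the second case to evaluate $N_{\le 11}^{bi}(\Pi) = 165 + 30 = 195$; the difference is once again $30$.

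There is essentially no genuine obstacle here, since (\ref{eq:bicro}) already encodes the bichromatic-counting argument from \cite{Ab:3sim}. The only point that deserves a brief check is that the two pieces of (\ref{eq:bicro}) agree at the boundary $k = n/3$, so that the telescoping between $k=10$ and $k=11$ is legitimate; a direct substitution shows both expressions produce the common value $3\binom{n/3+1}{2}$, which for $n=30$ is $165$. This confirms $N_k^{bi}(\Pi)=30$ uniformly for $k=11,12,13,14$, as claimed.
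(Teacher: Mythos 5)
Your proof is correct and follows exactly the route the paper intends: Corollary~\ref{cor:bi2} is stated as an immediate consequence of (\ref{eq:bicro}), obtained by taking differences $N_k^{bi}(\Pi)=N_{\le k}^{bi}(\Pi)-N_{\le k-1}^{bi}(\Pi)$ with $n=30$, which is precisely your telescoping computation (including the boundary check at $k=11$ against the first case of the formula).
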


\begin{lem}\label{lem:bihal}
$N_{15}^{bi}(\Pi) = 15$.
\end{lem}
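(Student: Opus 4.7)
The plan is a direct counting argument. Every unordered pair of elements from $A \cup B \cup C$ is swapped exactly once during the $30$-half-period $\Pi$, so the total number of bichromatic transpositions equals the number of pairs whose endpoints lie in two distinct color classes, which is $|A||B| + |A||C| + |B||C| = 3 \cdot 10 \cdot 10 = 300$. Since each transposition is $k$-critical for exactly one $k \in \{1, 2, \ldots, 15\}$, this gives the master identity
\begin{equation*}
\sum_{k=1}^{15} N_k^{bi}(\Pi) = 300.
\end{equation*}

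Next I would plug in the values already known for $k \le 14$. Corollary~\ref{cor:bi1} yields
\begin{equation*}
\sum_{k=1}^{10} N_k^{bi}(\Pi) = \sum_{k=1}^{10} 3k = 3 \cdot \binom{11}{2} = 165,
\end{equation*}
while Corollary~\ref{cor:bi2} yields
\begin{equation*}
\sum_{k=11}^{14} N_k^{bi}(\Pi) = 4 \cdot 30 = 120.
\end{equation*}
Subtracting these from $300$ leaves $N_{15}^{bi}(\Pi) = 300 - 165 - 120 = 15$, as claimed.

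There is no genuine obstacle here; the only thing to be careful about is the bookkeeping that halvings are counted once (since $15 = n/2$) while each $k < 15$ gathers both $k$- and $(30-k)$-transpositions, so that the disjoint union over $k \in \{1, \ldots, 15\}$ really does partition all $\binom{30}{2}$ transpositions. Once this is verified, the lemma follows in a single line.
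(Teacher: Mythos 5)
Your proof is correct and follows essentially the same route as the paper: count the $300$ bichromatic transpositions (one per pair in $A\times B$, $A\times C$, $B\times C$), then subtract the $165+120$ accounted for by Corollaries~\ref{cor:bi1} and~\ref{cor:bi2} to obtain $N_{15}^{bi}(\Pi)=15$. The bookkeeping remark that every transposition is $k$-critical for exactly one $k\in\{1,\ldots,15\}$ is a nice explicit touch, but otherwise the argument is the paper's own.
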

\begin{proof}
The number of bichromatic transpositions between $A$ and $B$ is $100$ because there is,  exactly, one bichromatic transposition for each element of $A\times B$. For the same reason there are $100$  bichromatic transpositions  between  $A$ and $C$ and $100$  between $B$ and $C$. So $N_{\leq 15}^{bi}(\Pi) = 300$. The desired result it follows from  Corollaries~\ref{cor:bi1} and~\ref{cor:bi2}  and the fact that $N_{15}^{bi}(\Pi)=300-\sum^{14}_{k=1}N_k^{bi}(\Pi)$.
\end{proof}

From the  above discussion, Corollary~\ref{cor:bi1}  and  Theorem~\ref{thm:main} it follows that all monochromatic transpositions occur in the {\em middle third}. Where the middle third is the space from the $11$th--position to $20$th--position.

\subsection{Digraphs}\label{subsec:dig}

Let $\Pi$ be a $3$--decomposable $n$--half-period of an allowable sequence ${\bf \Pi}$. A transposition between elements in the positions $i$ and $i + 1$ with $k < i < n-k$ is called a {\em $(> k)$--transposition}. All these transpositions are said to occur in the {\em $k$--center}. Let us denote the number of monochromatic transpositions that occur in the $k$--center and are of the kinds $aa$,  $bb$, and $cc$ by $N_{>k}^{aa}(\Pi)$, $N_{>k}^{bb}(\Pi)$, and $N_{>k}^{cc}(\Pi)$, respectively. Since each monochromatic transposition is an $aa$-- or $bb$-- or $cc$--transposition,  then $N_{>k}^{aa}(\Pi)+N_{>k}^{bb}(\Pi)+N_{>k}^{cc}(\Pi)$ is the total number of monochromatic transpositions that occur in the $k$--center.

Let $D_k$ be the digraph with vertex set $\{n/3, n/3 -1, \ldots,1\}$, and such that there is a directed edge from $i$ to $j$ if and only if $i > j$ and the transposition $a_ia_j$ occurs in the $k$--center.  Note that the number of edges of $D_k$ is exactly $N_{>k}^{aa}(\Pi)$.

In order to count the edges in $D_k$, let $\mc{D}_{v,m}$ be the class of all digraphs on $v$ vertices,
say $v,v-1,\ldots, 1$,  satisfying that $[i]^+ \leq m + [i]^-$ for all $v \geq i \geq 1$, where $[i]^+$ and $[i]^-$ denote the outdegree and the indegree of the vertex $i$, respectively, and if we have an edge from $i$ to $j$, $i \rightarrow j$, then $i > j$. Let $D_0(v,m)$ be the graph in $\mc{D}_{v,m}$ with vertices $v, v-1, \ldots,1$ recursively defined by
\begin{itemize}
\item $[v]^- = 0$,
\item $[i]^+ = \min \{ [i]^- + m , i-1 \}$ for each $v \geq i \geq 1$, and
\item for all $v \geq i >  j \geq 1$, $i \rightarrow j$ if and only if $i - 1 \geq j \geq i - 1 - [i]^-$.
\end{itemize}
Balogh and Salazar prove  in~\cite{BaSa}  that the maximum number of edges of a digraph in $\mc{D}_{v,m}$ is attained by $D_0(v,m)$. We note that $D_k$ is in $\mc{D}_{n/3,n-2k-1}$, and hence the number of edges in $D_k$ is bounded above by the number of edges in $D_0(n/3,n-2k-1)$.

\vspace{.5cm}

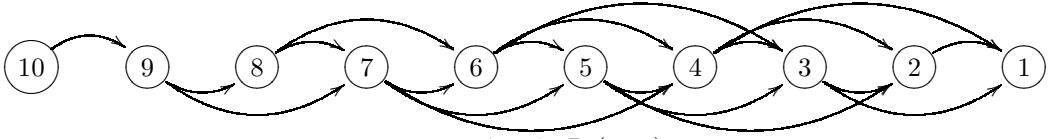
\begin{figure}[h]
\[ \entrymodifiers={++[o][F-]}
\xymatrix{
	10 \ar@/^{1pc}/[r]
	& 9 \ar@/_{0.8pc}/[r] \ar@/_{1.5pc}/[rr]
	& 8 \ar@/^{0.8pc}/[r] \ar@/^{1.5pc}/[rr]
	& 7 \ar@/_{0.8pc}/[r] \ar@/_{1.5pc}/[rr] \ar@/_{2pc}/[rrr]
	& 6 \ar@/^{0.8pc}/[r] \ar@/^{1.5pc}/[rr] \ar@/^{2pc}/[rrr]
	& 5 \ar@/_{0.8pc}/[r] \ar@/_{1.5pc}/[rr] \ar@/_{2pc}/[rrr]
	& 4 \ar@/^{0.8pc}/[r] \ar@/^{1.5pc}/[rr] \ar@/^{2pc}/[rrr]
	& 3 \ar@/_{0.8pc}/[r] \ar@/_{1.5pc}/[rr]
	& 2 \ar@/^{0.8pc}/[r]
	& 1
           } \]
\caption{\small Digraphs $D_0(10,1)$}
\label{fig:digra}
\end{figure}

From the preceding information, we can deduce that the number of edges in $D_{14}$ is at most $20$ (Figure~\ref{fig:digra}). This means that $N_{15}^{aa}(\Pi) \leq 20$, $N_{15}^{bb}(\Pi) \leq 20$, and $N_{15}^{cc}(\Pi) \leq 20$. Similarly, the number of edges in $D_{13}$ is at most $33$ and we know that $\binom{30}{2} - N_{\leq 13}(\Pi) = 144$ because all the bounds for $(\leq k)$--sets, for $k = 1, \ldots, 13$, are tight. Thus $N_{14}(\Pi) + h(\Pi) = 144$, besides from Corollary~\ref{cor:bi2} and Lemma~\ref{lem:bihal} we get that $N_{14}^{bi}(\Pi) + N_{15}^{bi}(\Pi) = 45$.  This implies that $N_{>13}^{mono}(\Pi) = 99$ and therefore there are exactly $33$--monochromatic transpositions in the $13$--center per each set $A$, $B$ and $C$.

\begin{lem}\label{lem:dig33}
If $D$ is a digraph in $\mc{D}_{10,3}$ with $33$ edges,  then for $i,j = 10, 9, 8, 7$ and $i > j$ there is an edge  from $i$ to $j$.
\end{lem}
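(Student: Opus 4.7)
The plan is to bound the total number of edges of any $D \in \mathcal{D}_{10,3}$ vertex-by-vertex, observe that the ten bounds sum to exactly $33$, and then use the equality case to pin down the edges among the top four vertices.

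First I would observe that for every vertex $i$ the indegree is subject to the trivial structural bound $[i]^- \leq 10-i$, since only vertices labeled larger than $i$ can send an edge to $i$. Combined with $[i]^+\leq i-1$ and the defining condition $[i]^+\leq 3+[i]^-$, this yields
\[
[i]^+ \;\leq\; \min\bigl\{i-1,\; 3+[i]^-\bigr\} \;\leq\; \min\bigl\{i-1,\; 13-i\bigr\}.
\]
Evaluating for $i=10,9,\ldots,1$ gives the respective upper bounds $3,4,5,6,5,4,3,2,1,0$, whose sum is exactly $33$. Hence $|E(D)|\leq 33$.

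Next, under the hypothesis $|E(D)|=33$, each of the ten vertex bounds must be attained. For $i\in\{7,8,9,10\}$ the binding bound is $[i]^+= 3+[i]^-$, which in turn forces $[i]^-$ to equal its maximum possible value $10-i$; equivalently, every vertex larger than $i$ must send an edge to $i$. Applied in succession, this yields the edges $10\to 9$; then $10\to 8$ and $9\to 8$; and finally $10\to 7$, $9\to 7$, and $8\to 7$. These are precisely the six directed edges on the vertex set $\{10,9,8,7\}$ asserted by the lemma. (As a consistency check, the equality $[10]^+=3$ combined with the relations just derived shows that the three out-edges of $10$ are exactly $10\to 9$, $10\to 8$, $10\to 7$.)

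The main obstacle is simply establishing the edge-bound $33$ with no slack, so that the equality case bites; once the vertex-wise bounds are in place, the structural conclusion follows immediately from the indegree constraints at the four highest vertices. I expect no further case analysis to be needed.
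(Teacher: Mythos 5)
Your proof is correct. It is in essence the same counting argument as the paper's, but with different bookkeeping: the paper partitions the edge set into three blocks -- edges inside $\{10,9,8,7\}$ (at most $6$), edges from $\{10,9,8,7\}$ to $\{6,\ldots,1\}$ (at most $12$), and edges inside $\{6,\ldots,1\}$ (at most $\binom{6}{2}=15$) -- so that reaching $33$ forces all six edges of the first block, whereas you sum the per-vertex outdegree bounds $[i]^+\leq\min\{i-1,\,3+[i]^-\}\leq\min\{i-1,\,13-i\}$, get $3+4+5+6+5+4+3+2+1+0=33$, and extract the conclusion from the equality case by forcing $[9]^-=1$, $[8]^-=2$, $[7]^-=3$. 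The two counts are equivalent (the paper's ``clearly at most $12$'' is exactly your bounds $3+[i]^-$ telescoped over the top four vertices, since their indegrees are supplied only by edges internal to $\{10,9,8,7\}$), but your version has two small advantages: it makes that unexplained ``at most $12$'' step explicit, and it does not need to invoke the extremal digraph $D_0(6,3)$, only the trivial bound $\binom{6}{2}$ hidden in $[i]^+\leq i-1$. Conversely, the paper's block formulation is slightly quicker to state once those two bounds are granted. Either way the equality analysis at vertices $9,8,7$ (or the paper's ``the remaining $6$ edges must all be present'') correctly yields all six edges among $\{10,9,8,7\}$.
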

\begin{proof}
Clearly, the number of edges  with tail in $\{10, 9, 8, 7\}$ and head in $\{6, 5, \ldots, 1\}$ is at most $12$ and the number of edges in the vertex set $\{6, 5, \ldots, 1\}$ is at most $15$ (this is attained by $D_0(6,3)$). Then we need the $6$ edges between the elements in  $\{10, 9, 8, 7\}$ in order to get the $33$ edges in $D$.
\end{proof}

\subsection{Restrictions in the monochromatic transpositions}

From now on, we shall use $\Pi = (\pi_0, \pi_1, \ldots, \pi_{\binom{30}{2}})$ to denote a
$3$--decomposable $30$--half-period of an optimal configuration for $K_{30}$ and
$\pi_0 = (a_{10}, a_9, \ldots, a_1,b_{l_1}, \ldots, b_{l_{10}}, c_1, c_2, \ldots, c_{10})$ to denote its first permutation. Also we assume that $A:=\{a_{1}, a_2,\ldots, a_{10}\}$, $B:=\{b_{l_1}, b_{l_2}, \ldots, b_{l_{10}}\}$ and $C:=\{c_{1}, c_2 \ldots, c_{10}\}$.

 As $\Pi$ is $3$--decomposable and all monochromatic transpositions occur in the middle third, it follows that there is a unique element of $B$ that reaches the position $1$ (or $30$). We shall denote by $b_{10}$ to such element of $B$. For the same reasons,  for $i=2,3,\ldots ,10$,  there is a unique element of $B$, which we denote by $b_{10-i+1}$,  that reaches the position $i$ (or $30-i+1$) but not the position $i-1$ (or $30-i+2$). Clearly, $B=\{b_{1}, b_{2}, \ldots, b_{10}\}$.

In this subsection we use that in $\Pi$ the lower bound given in  (\ref{eq:edg}) is tight for
$k =0,\ldots ,12$ in order to deduce some restrictions about  the monochromatic transpositions.

\begin{rem}\label{remark1}
Because $\Pi$ is $3$--decomposable  ($A$ can interchange the role with $B$ or $C$), everything that we say for $A$ is also valid for $B$ or $C$.
\end{rem}

\begin{lem}\label{lem:11-13}
Each transposition of $\Pi$ that contributes to $N^{mono}_{11}(\Pi)+N^{mono}_{12}(\Pi)+N^{mono}_{13}(\Pi)$ involves to some of $a_{10}, a_9,  a_8, b_{10}, b_9, b_8, c_{10}, c_9$ or $c_8$
\end{lem}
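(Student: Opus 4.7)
The plan is to exploit the extremality of the digraph $D_{13}$ together with its analogues $D_{13}^B, D_{13}^C$ for the sets $B$ and $C$. From the preceding discussion, $N^{aa}_{>13}(\Pi) = N^{bb}_{>13}(\Pi) = N^{cc}_{>13}(\Pi) = 33$, so each of these three digraphs lies in $\mc{D}_{10,3}$ and realizes the maximum number of edges $33 = |D_0(10,3)|$.

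The main step is to show that \emph{any} $D \in \mc{D}_{10,3}$ with $|D|=33$ contains all $\binom{7}{2}=21$ directed edges within the vertex set $\{1,2,\ldots,7\}$. I would combine the defining inequality $[i]^+ \le [i]^-+3$ with the trivial bound $[i]^- \le 10-i$ to get $[i]^+ \le \min\{13-i,\, i-1\}$; summing these maxima over $i = 10,9,\ldots,1$ yields exactly $3+4+5+6+5+4+3+2+1+0 = 33$. So if $|D| = 33$, equality must hold at every vertex. In particular $[i]^+ = i-1$ for each $i \in \{2,3,4,5,6,7\}$, which forces each such vertex to send an edge to every smaller vertex. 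Collecting these edges gives exactly the $21$ edges inside $\{1,\ldots,7\}$.

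Translating back, every $a_i a_j$ transposition with $i,j \in \{1,\ldots,7\}$ occurs in the $13$-center (at position $14$, $15$ or $16$), and so contributes nothing to $N^{mono}_{11}(\Pi)+N^{mono}_{12}(\Pi)+N^{mono}_{13}(\Pi)$. Consequently every $aa$-transposition counted there must involve at least one of $a_8,a_9,a_{10}$. Applying Remark~\ref{remark1} to exchange the role of $A$ with $B$ or $C$ yields the analogous statements for $bb$- and $cc$-transpositions, completing the lemma.

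The main technical obstacle is the rigidity statement in the second paragraph: confirming that simultaneous tightness of the two bounds $[i]^+ \le [i]^-+3$ and $[i]^- \le 10-i$ across all ten vertices really does pin down $[i]^+ = i-1$ on the low-indexed half. This is a short but careful case check with no slack anywhere to be absorbed; once it is in hand, the translation to transpositions and the symmetry appeal via Remark~\ref{remark1} are direct.
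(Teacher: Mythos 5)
Your proof is correct, but it follows a genuinely different route from the paper's. The paper pins down the exact level counts $N^{mono}_{11}(\Pi)=6$, $N^{mono}_{12}(\Pi)=12$, $N^{mono}_{13}(\Pi)=18$ from the tightness of \eqref{eq:edg} for $k\le 12$ together with \eqref{eq:bicro}, and then exhausts these counts by an element-by-element accounting: $x_{10}$ has exactly one monochromatic swap per gate of the middle third (using up level $11$), then $x_9$'s monochromatic swaps are forced into the $11$--center and, with $x_{10}$, use up level $12$, then $x_8$ uses up level $13$, the swaps among $\{x_{10},x_9,x_8,x_7\}$ being placed in the $13$--center by Lemma~\ref{lem:dig33}. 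You instead exploit only the extremality of the three digraphs: each class has exactly $33$ monochromatic transpositions in the $13$--center, your termwise bound $[i]^+\le\min\{13-i,\,i-1\}$ (valid since $[i]^-\le 10-i$ and $[i]^+\le[i]^-+3$ is the defining inequality of $\mathcal{D}_{10,3}$) sums to exactly $33$, so equality holds at every vertex, forcing $[i]^+=i-1$ for $i\le 7$ and hence all $\binom{7}{2}$ swaps among $\{x_1,\ldots,x_7\}$ into the $13$--center; since its gates ($14$--$16$) are disjoint from the $11$-, $12$- and $13$-critical gates, every contributing monochromatic swap must involve $x_{10},x_9$ or $x_8$, and the $B$, $C$ cases follow from Remark~\ref{remark1} exactly as in the paper. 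The arithmetic checks out ($3+4+5+6+5+4+3+2+1+0=33$), so the rigidity step is sound; your route is shorter, never needs the values $6,12,18$, strengthens Lemma~\ref{lem:dig33} on the low-index side, and delivers directly the fact that all swaps among $\{x_1,\ldots,x_7\}$ occur in the $13$--center, which the paper only re-derives later (proof of Lemma~\ref{remark2}) from this lemma plus the middle-third observation. One caveat if your argument were substituted wholesale: the paper's accounting also shows finer gate information---e.g.\ every monochromatic transposition involving $x_9$ (respectively $x_8$) lies in the $11$--center (respectively $12$--center)---which later proofs invoke under the name of this lemma (for instance for $b_{10-t+1}$ in Lemma~\ref{remark2}); your degree argument does not yield that refinement, so those subsequent steps would still require the paper's counting.
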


\begin{proof} Since we have exactly $33$ monochromatic transpositions in the $13$--center, then,  by Lemma~\ref{lem:dig33}, mandatory the transpositions between elements of $\{a_{10}, a_9, a_8, a_7\}$ occur in the $13$--center.

From the equation~(\ref{eq:bicro}) and the fact that  (\ref{eq:edg}) is tight for $k =0,\ldots ,12$, we get that $N_{11}^{mono}(\Pi) = 6$, $N_{12}^{mono}(\Pi) = 12$ and $N_{13}^{mono}(\Pi) = 18$. Because no other $a$ is behind $a_{10}$, it is not possible to have more than one monochromatic transposition per gate involving $a_{10}$.  Furthermore, $a_{10}$ should change with $a_9, a_8,\ldots, a_1$ in the $10$--center (middle third). Thus $a_{10}$ has one monochromatic transposition in each gate of the middle third. By Remark~\ref{remark1} the same happen with $b_{10}$ and $c_{10}$. Thus,  the $2 \cdot 3 $ monochromatic transpositions due to $a_{10}$, $b_{10}$ and $c_{10}$ are all the monochromatic transpositions associated with $N^{mono}_{11}(\Pi)$.

For the preceding, every monochromatic transposition involving $a_9$ occurs in $11$--center. Since the swap between $a_{10}$ and $a_9$ occurs in the $13$--center, thus $a_9$ contributes $2$  to $N^{mono}_{12}(\Pi)$. So we have $4$ different monochromatic transpositions due to $a_{10}$ and $a_9$. By Remark~\ref{remark1}, we get $2 \cdot 3 + 2 \cdot 3$ monochromatic transpositions due to $a_{10}, a_9, b_{10}, b_9$ and $c_{10}, c_9$ and they are all the monochromatic transpositions associated with $N^{mono}_{12}(\Pi)$.

So each  monochromatic transposition involving $a_8$ occurs in the $12$--center.  Thus $a_8$ contributes $2$  to $N^{mono}_{13}(\Pi)$. $a_{10}$ and $a_9$ also have others $2$ monochromatic transpositions there, and all the transpositions are different because $a_{10}, a_9$ and $a_8$ change in the $13$--center. Hence the $2 \cdot 3 + 2 \cdot 3 + 2 \cdot 3$ monochromatic transposition associated with $N^{mono}_{13}(\Pi)$ are generated by $a_{10}, a_9,  a_8, b_{10}, b_9, b_8, c_{10}, c_9$ and $c_8$.
\end{proof}

Let $k\in \{10,11,\ldots ,14\}$. Note that every element in a $3$--decomposable $30$--half-period $\Pi'$ occupies each position of the $10$--center at least once. From now on, if $\tau$ is the first  (respectively, last) transposition in which $x\in A\cup C$ enters (respectively, leaves) the $k$--center, then we say that $\tau$ is the swap in which $x$ {\em enters} (respectively,  {\em leaves})  the $k$--center of $\Pi'$.

\begin{lem}\label{remark2}

For $x\in \{a, c\}$, the elements $x_1, x_2,\ldots, x_{10}$ enter (respectively, leave) the $13$--center of $\Pi$ in ascending (respectively, descending) order. Moreover, for $i=1,2,\ldots,7$;

\begin{description}
\item{(1)}   the swap between $a_i$ and $b_{7-i+1}$ occurs in the $13$th--gate and it is precisely the swap in which $a_i$ enters (and $b_{7-i+1}$ {\em leaves}) the $13$--center of $\Pi$,

\item{(2)}  the swap between $a_{7-i+1}$ and $c_i$ occurs in the $17$th--gate and it is precisely the swap in which $a_{7-i+1}$ leaves (and $c_i$  enters) the $13$--center of $\Pi$ and,

\item{(3)} the swap between $b_i$ and $c_{7-i+1}$ occurs in the $13$th--gate and it is precisely the swap in which $c_{7-i+1}$ leaves (and $b_i$ {\em enters}) the $13$--center of $\Pi$. \end{description}
It follows from (3) (respectively, (1)) that $b_1, b_2,\ldots, b_{7}$  also enter (respectively, leave) the $13$--center of $\Pi$ in ascending (respectively, descending) order.

\end{lem}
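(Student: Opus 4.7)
The plan is to determine precisely which transpositions occur at gate $13$ (the left boundary of the $13$-center) and at gate $17$, using the tight counts from Corollaries~\ref{cor:bi1}, \ref{cor:bi2}, Lemma~\ref{lem:bihal}, and the strong restrictions provided by Lemma~\ref{lem:11-13}, together with the three-phase structure of a $3$-decomposable half-period and the symmetry furnished by Remark~\ref{remark1}. It will be enough to handle, say, the $a$-side at gate $13$ thoroughly; (1) will follow, and (2), (3) will follow by an entirely analogous analysis at the other gate and for the other group pairings, while the final sentence of the lemma will be immediate from (1) and (3).

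\textbf{Key rule-out: $aa$-swaps cannot be first entries for $i\le 7$.} Two elements of $\Pi$ transpose exactly once per half-period, so they maintain their initial relative order until that unique transposition. For $r<s$, this means that in the unique swap between $a_r$ and $a_s$ the element $a_s$ moves rightward and $a_r$ leftward. By Lemma~\ref{lem:11-13} every $aa$-transposition at gate $13$ involves at least one element of $\{a_8,a_9,a_{10}\}$, and from its proof the three intra-$\{a_8,a_9,a_{10}\}$ swaps occur strictly inside the $13$-center (at gates $14,15,16$). Therefore each $aa$-swap at gate $13$ is between some $a_s\in\{a_8,a_9,a_{10}\}$ (moving from position $13$ to $14$, hence entering the $13$-center) and some $a_r\in\{a_1,\ldots,a_7\}$ (moving from $14$ to $13$, hence exiting). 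In particular, for $i\le 7$ the first entry of $a_i$ through gate $13$ cannot be an $aa$-swap.

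\textbf{Identifying first entries as $ab$-swaps with partners $b_{7-i+1}$.} At $\pi_{s+1}$ the ten $a$'s occupy positions $11$--$20$ and the ten $b$'s occupy $1$--$10$; in particular exactly seven $a$'s sit at positions $\ge 14$. I would prove by induction on $i=1,\ldots,7$ that these seven $a$'s are precisely $a_1,\ldots,a_7$, that their first entries happen in ascending order during the $AB$-phase, and that $a_i$'s first entry is the bichromatic $ab$-transposition with $b_{7-i+1}$. The induction uses that $a_1$ is initially at position $10$ (closest to gate $13$), that the only bichromatic type available in the $AB$-phase is $ab$, and a careful accounting of the leftward monochromatic $aa$-moves at gates $11,12,13$ permitted by Lemma~\ref{lem:11-13} to show that no $a_j$ with $j\le 7$ can be delayed past $\pi_{s+1}$. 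A symmetric argument from the $B$-side, using the labelling $b_j$ (the $b$ that reaches position $j$ but not position $j-1$), shows that $b_1,\ldots,b_7$ leave the $13$-center through gate $13$ in descending order $b_7,b_6,\ldots,b_1$; matching the two orderings forces the partners $(a_i,b_{7-i+1})$, proving (1). Statements (2) and (3) follow from the same construction applied to the pairs $(A,C)$ at gate $17$ and $(B,C)$ at gate $13$, and the final sentence is then immediate. The principal obstacle is the inductive identification of the seven $a$'s at positions $\ge 14$ at $\pi_{s+1}$ with exactly $\{a_1,\ldots,a_7\}$ (rather than some other seven-element subset of $A$), which requires the most delicate bookkeeping of the sparse monochromatic transpositions afforded by Lemma~\ref{lem:11-13} in the outer gates of the middle third.
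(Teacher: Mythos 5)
Your opening reduction is sound and is also implicit in the paper: since a monochromatic $aa$-swap at the $13$th--gate is $13$--critical, Lemma~\ref{lem:11-13} together with Lemma~\ref{lem:dig33} forces it to be a swap in which some $a_s$, $s\in\{8,9,10\}$, moves from position $13$ to $14$ while an $a_r$, $r\le 7$, is pushed out; hence the first entry of $a_i$, $i\le 7$, cannot be monochromatic. The gap is everything after that. The whole content of statement (1) --- that this entry occurs at the $13$th--gate with partner exactly $b_{7-i+1}$ --- is packed into an induction that you only announce (``careful accounting'', ``delicate bookkeeping'') and never carry out, and its pivotal intermediate claim, that in $\pi_{s+1}$ the seven $a$'s occupying positions $14$ through $20$ are precisely $a_1,\ldots,a_7$ (equivalently, that $a_8,a_9,a_{10}$ enter the $13$--center only after the last $ab$--transposition), does not follow from anything you cite: $3$--decomposability orders only the bichromatic phases, and monochromatic transpositions may be interleaved with them, so none of Corollaries~\ref{cor:bi1}--\ref{cor:bi2}, Lemma~\ref{lem:bihal} or Lemma~\ref{lem:11-13} prevents $a_8$ from making its (monochromatic) entry at the $13$th--gate while some $b$'s still have pending swaps with $a_9$ and $a_{10}$ at positions below $13$. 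Moreover, even granting the ascending order of the entries of $a_1,\ldots,a_7$ and the descending order of the exits of the $b$'s, ``matching the two orderings'' yields the partner $b_{7-i+1}$ only if one already knows that $b_{10},b_9,b_8$ are out of the way before any $a_i$ with $i\le 7$ reaches the $13$th--gate; your plan contains no argument for this. Finally, the first assertion of the lemma (all ten $a$'s and all ten $c$'s enter in ascending and leave in descending order) is never actually derived.

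The missing ingredient is exactly what the paper supplies, by a shorter route that never refers to $\pi_{s+1}$: because all monochromatic transpositions lie in the middle third, the $a$'s and the $b$'s enter (leave) the $10$--center in ascending (descending) order; because, from the proof of Lemma~\ref{lem:11-13}, every monochromatic transposition of $b_{10}$, $b_9$, $b_8$ is confined to the $10$--, $11$--, $12$--center respectively, the swaps of $a_1$ with $b_{10}, b_9, b_8$ are pinned to the $10$th, $11$th and $12$th gates; and since all transpositions among $b_1,\ldots,b_7$ occur in the $13$--center while the $b$'s leave in descending order, the swap in which $a_j$ enters the $13$--center must be with $b_{7-j+1}$. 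To salvage your plan you would have to prove your claim about $\pi_{s+1}$ (i.e., that no monochromatic entry at the $13$th--gate precedes the last $ab$--transposition), a statement of essentially the same difficulty as the lemma itself, or replace it by the pinning of $a_1$'s swaps with $b_{10},b_9,b_8$ to the $10$th--$12$th gates as the paper does.
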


\begin{proof}  By Lemma~\ref{lem:dig33} and the fact that there are exactly $33$ monochromatic transpositions in the $13$--center of $\Pi$, each transposition between elements of $\{x_{10}, x_9, x_8, x_7\}$ occurs in the $13$--center. Also, by Lemma~\ref{lem:11-13}, each transposition between elements of $\{x_{7}, x_6,\ldots, x_1\}$ occurs in the $13$--center. Together, these two conclusions, imply that the elements of  $\{x_{10}, x_9,\ldots, x_1\}$ enter (respectively, leave) the $13$--center of $\Pi$ in ascending (respectively, descending) order. 

We only show (1). The parts (2) and (3) are analogous.
 
Let $w\in \{a, b\}$. Because all monochromatic transpositions of $\Pi$ occur in the 
$10$--center, then the elements of $\{w_{10}, w_9,\ldots, w_1\}$ enter (respectively, leave) the $10$--center of $\Pi$ in ascending (respectively, descending) order. 

For $t=1,2,3$ we know (Lemma~\ref{lem:11-13}) that every  monochromatic transposition involving $b_{10-t+1}$ occurs in the $(10+t-1)$--center. This and the fact that the $b$'s leave the $10$--center in descending order  imply that the swap between $a_1$ and $b_{10-t+1}$ occurs in the $(10+t-1)$th--gate. 

Since  (Lemma~\ref{lem:11-13}) each transposition between elements of $\{b_{7}, b_6,\ldots, b_1\}$ occurs in the $13$--center and they leave the $10$--center in descending order, then the swap where $a_j$ enters in the $13$--center must be with $b_{7-j+1}$, where $j=1,2,\ldots,7$.
\end{proof}

\begin{lem}\label{lem:restrcent}
Let $\pi_{a_{10}} $ be the permutation of $\Pi$ where $a_{10}$ enters in the $13$--center. Then $\pi_{a_{10}} $ looks like $$(B, a_{\leq 4}, a_{\leq 5}, a_{\leq 6}, a_{10}, a_i, a_j, a_k, a_{\leq 6}, a_{\leq 5}, a_{\leq 4}, C)$$ where $a_{\leq p}$ is an $a_u$ with $1 \leq u \leq p$, further $\{i,j,k\}=\{7, 8, 9\}$.
\end{lem}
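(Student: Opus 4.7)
The plan is to reconstruct $\pi_{a_{10}}$ by tracking $a_{10}$'s trajectory across the half-period. Since $a_{10}$ starts at position $1$ and ends at position $30$, and these positions differ by $29$, every one of $a_{10}$'s $29$ swaps moves it exactly one position to the right. Combining this with the proof of Lemma~\ref{lem:11-13}---which shows $a_{10}$ has one monochromatic $AA$-swap at each of the nine middle-third gates---$a_{10}$'s $k$th swap must be $AB$ for $k \leq 10$, $AA$ for $11 \leq k \leq 19$, and $AC$ for $k \geq 20$. Hence the transposition in which $a_{10}$ enters the $13$-center is its $13$th swap, occurring at gate $(13,14)$; so $a_{10}$ sits at position $14$ in $\pi_{a_{10}}$. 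By Lemma~\ref{lem:dig33}, the three $aa$-swaps $a_{10}a_7, a_{10}a_8, a_{10}a_9$ all lie in the $13$-center (at gates $(14,15), (15,16), (16,17)$), so the partner of $a_{10}$'s $13$th swap is some $a_{i_3}$ with $i_3 \leq 6$, occupying position $13$ in $\pi_{a_{10}}$.

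Next I show that $\pi_{a_{10}}$ sits at or just after $\pi_{s+1}$ and strictly before any $AC$-swap, placing $B$ at positions $1$--$10$ and $C$ at positions $21$--$30$. For $B$: Lemma~\ref{remark2}(1) identifies $a_7$-$b_1$ at gate $(13,14)$ as the $AB$-swap by which $a_7$ enters the $13$-center; combining this with $3$-decomposability (all $AB$-swaps precede any $AC$- or $BC$-swap) and the fact that $a_{10}$'s swap $13$ is $AA$, I argue that no $AB$-swap can occur strictly after $a_{10}$'s swap $13$, so $B$ sits at positions $1$--$10$ in $\pi_{a_{10}}$. Symmetrically, Lemma~\ref{remark2}(2) pinpoints $a_7$-$c_1$ at gate $(17,18)$ as the $AC$-swap by which $a_7$ last leaves the $13$-center; by the descending-leave ordering of Lemma~\ref{remark2} this last leave occurs strictly after $a_{10}$'s own last leave (its swap $17$), hence strictly after $\pi_{a_{10}}$, so no $c$ has yet entered the middle third and $C$ sits at positions $21$--$30$.

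With $B$ at $1$--$10$ and $C$ at $21$--$30$, the nine $a$'s other than $a_{10}$ fill positions $11, 12, 13, 15, 16, 17, 18, 19, 20$. Positions $15, 16, 17$ must be exactly $\{a_7, a_8, a_9\}$ in some order: $a_{10}$'s next three swaps ($14, 15, 16$) occur at gates $(14,15), (15,16), (16,17)$ and are precisely the swaps $a_{10}a_7, a_{10}a_8, a_{10}a_9$ by Lemma~\ref{lem:dig33}. Between $\pi_{a_{10}}$ and $a_{10}$'s swap $14$ no gate-$(14,15)$ swap can occur (any such swap would involve $a_{10}$). Since all six internal $aa$-swaps among $\{a_7, a_8, a_9, a_{10}\}$ are forced into the three $13$-center gates by Lemma~\ref{lem:dig33}, a bookkeeping argument excludes any $a_j$ with $j \leq 6$ from sitting at positions $15, 16, 17$ at the instant $\pi_{a_{10}}$.

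Finally, the index bounds at positions $11, 12, 13$ and (symmetrically) $18, 19, 20$ come from applying the digraph analysis of Section~\ref{subsec:dig} to the coarser centers. The digraphs $D_{12}$ and $D_{11}$ (with parameters $m = 5$ and $m = 7$, maximum edge counts $39$ and $43$) force $a_{10}$'s $AA$-partners at gates $(13, 14), (12, 13), (11, 12)$ to have indices at most $6, 5, 4$ respectively, and symmetrically at $(17, 18), (18, 19), (19, 20)$. Each partner $a_{i_r}$ lands at its named position immediately after its swap with $a_{10}$; since $\pi_{a_{10}}$ lies at or after $\pi_{s+1}$ and no $AB$-swap remains to pull a partner out of the middle third leftward, the partner stays at its position through $\pi_{a_{10}}$. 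The step I expect to be hardest is this final locking-in: rigorously ruling out that a partner $a_{i_r}$ could slip to a different middle-third position between its swap with $a_{10}$ and $\pi_{a_{10}}$, which requires the most delicate interplay of $3$-decomposability (constraining $AB$-moves) and the digraph bounds (constraining long-distance $AA$-moves).
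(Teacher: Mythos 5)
Your opening analysis is fine and even elegant: since $a_{10}$ must travel from position $1$ to position $30$ in exactly $29$ swaps, every swap moves it right, its $k$th swap is at gate $k$, its swaps at gates $11$--$19$ are its nine $aa$--swaps, it sits at position $14$ in $\pi_{a_{10}}$, and by Lemma~\ref{lem:dig33} its gate-$13$ partner has index at most $6$. But the proof collapses at the step that carries the actual content of the lemma, namely the graded bounds $a_{\le 4},a_{\le 5},a_{\le 6}$ at positions $11,12,13$ and $18,19,20$. Your mechanism --- that $D_{12}$ and $D_{11}$ attain the maximal edge counts $39$ and $43$ in $\mc{D}_{10,5}$ and $\mc{D}_{10,7}$, and that this extremality ``forces'' the partners of $a_{10}$ at gates $13,12,11$ to have indices at most $6,5,4$ --- cannot work. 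The Balogh--Salazar theorem bounds only the number of edges and does not pin down the extremal structure: deleting the edge $10\to 5$ from $D_0(10,5)$ and inserting $10\to 4$ gives another $39$-edge member of $\mc{D}_{10,5}$, so not even the out-neighbourhood of the vertex $10$ is determined. Worse, $D_k$ records only which pairs $a_ia_j$ swap somewhere in the $k$-center, not at which gate nor in which temporal order, so no statement about $D_{12}$ or $D_{11}$ can distinguish the gate-$11$ partner from the gate-$12$ or gate-$13$ partner, which is precisely the distinction the bounds $\le 4$ versus $\le 5$ versus $\le 6$ require; and even granting the extremal shape, the two extra out-edges of $10$ in $D_{11}$ could go to indices up to $6$, not $\le 5$ and $\le 4$. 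On the right-hand side the setup is also off: the elements occupying positions $18$--$20$ at the instant $\pi_{a_{10}}$ need not be the future partners of $a_{10}$ at gates $17,18,19$. The paper proves these bounds by completely different arguments: for positions $18$--$20$, a pigeonhole argument at the moments $\tau_5,\tau_6,\tau_7$ when $a_5,a_6,a_7$ enter the $13$-center (already at $\tau_5$ some $a_r$ with $r\le 4$ lies to the right of the $13$-center, and since an $a$ can only be moved left by an $aa$-swap allowed by Lemma~\ref{lem:11-13}, it is pinned at position $20$ until $a_{10}$ leaves); for positions $11$--$13$, the observation that the partner $a_{l_j}$ of $a_{10}$ at gate $14-j$ can re-enter the $13$-center only after $j$ swaps with $c$'s, hence only after $c_j$ has entered it, by which time (Lemma~\ref{remark2}) every $a_n$ with $n\ge 8-j$ has already left, forcing $l_j\le 7-j$. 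Neither idea appears in your proposal, and I see no way to extract the graded bounds from edge-maximality alone.

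The middle portion is also thinner than it appears. The key claim that no $AB$-swap occurs after $a_{10}$'s $13$th swap is only announced (``I argue that\ldots''), and the inference that $C$ occupies positions $21$--$30$ because the single swap $a_7c_1$ happens after $\pi_{a_{10}}$ is a non sequitur: that one swap being late does not prevent $c_1$ from crossing gate $20$ much earlier with some other $a$. Both facts are true and provable (for instance, $a_{10}$'s first ten swaps are all with $b$'s, each such $b$ lands at a position $\le 10$ and, before the $BC$-phase, a $b$ can move right only through a $bb$-swap inside the middle third, so all $b$'s are frozen on positions $1$--$10$ from $a_{10}$'s tenth swap onward; and an $a$ can never re-cross gate $20$ leftward, so no $a$ sits beyond position $20$ at $\pi_{a_{10}}$, whence the $c$'s fill $21$--$30$ by counting), but as written they are placeholders. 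Likewise your ``bookkeeping argument'' for positions $15$--$17$ must rule out a temporary excursion of $a_8$ or $a_9$ out of the $13$-center before $\pi_{a_{10}}$, which entered-before/leave-after bookkeeping alone does not do, since enter and leave refer to first and last crossings. In short: the decisive gap is the digraph step, and the remaining steps are sketches that would need real arguments.
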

\begin{proof} For $j=7,6,\ldots, 1$ let $\tau_j$ be the transposition  in which $a_j$ enters in the $13$--center. So,  when $\tau_5$ occurs  there is at least one $r \in \{1, 2, 3, 4 \}$ such that $a_r$ is to the right hand side of the $13$--center (without loss of generality, we assume that $a_r$ is the  rightmost $a$ element). By Lemma~\ref{lem:11-13},  all the monochromatic transpositions between elements of
$\{a_7,a_6, \ldots ,a_1\}$ or between elements of $\{a_{10},a_9,a_8, a_7\}$ occur in the $13$--center. Thus $a_r$  does not move to the left until after $a_{10}$ exits of the $13$--center. On the other hand, since all monochromatic transpositions occur in the middle third, thus, when $a_{10}$ enters in the $13$--center $a_r$ must be at position $20$. Using  similar arguments  with $\tau_6$ and $\tau_7$ we get the restriction on the right hand side.

Let $a_{l_j}$ be the $a$  that swaps with  $a_{10}$ in the $(14-j)$th--gate (where $j=3,2,1$). Since each $aa$ transposition that contributes to $N_{11}(\Pi)+N_{12}(\Pi)+N_{13}(\Pi)$ involves to $a_{10}$, $a_9$ or $a_8$ and the transpositions between elements of $\{a_{10}, a_9, a_8, a_7\}$ occur in the $13$--center,  then  $l_j\leq 6$. Thus $a_{l_j}$ needs $j$  transpositions of kind $a_{l_j}c$ in order to move  to $13$--center.  Hence  $a_{l_j}$ will remain to the left hand side of the $13$--center until after $c_j$ enters in the $13$--center. But, by Lemma~\ref{remark2}, when $c_j$ enters in the $13$--center all $a_n$'s with $n \geq 8-j$ have left from there. Hence $l_j \leq 7-j$.
\end{proof}

Let $\hal(a_j)$ denote the number of $a_i$ elements, $i < j$, such that $a_j$ changes with $a_i$ in the $15$th--gate. This means, the outdegree of the vertex $a_j$ in the digraph $D_{14}$ associated to $N_{> 14}^{aa}(\Pi)$.

Some facts are easier to see in $\Pi^*$, the {\em reverse half-period of}  $\Pi$. We define the reverse half-period of $\Pi$ as $\Pi^* = (\pi_0^*, \pi_1^*, \ldots, \pi_l^*, \ldots, \pi_{\binom{30}{2}}^*) := (\pi_{\binom{30}{2}}^{-1}, \pi_{\binom{30}{2}-1}^{-1}, \ldots, \pi_{\binom{30}{2} - l}^{-1}, \ldots, \pi_0^{-1})$. It is clear that $\Pi$ and $\Pi^*$ have the same combinatorial properties.

\begin{lem}\label{lem:hal(a_i)}
Let $\pi_{a_{10}} $ be the permutation of $\Pi$ where $a_{10}$ enters in the $13$--center. If $a_i$, $1 \leq i \leq 5$, is at position $10 + l$ or at position $20-l+1$, $1 \leq l \leq 3$, then $\hal(a_i) \leq l $
\end{lem}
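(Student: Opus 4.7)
Plan: The strategy is to split $\hal(a_i)=\hal^{<}(a_i)+\hal^{>}(a_i)$, counting the halving swaps of $a_i$ with some $a_j$ (with $j<i$) that occur before, respectively after, $\pi_{a_{10}}$, and to bound each summand separately. The symmetric case where $a_i$ lies at position $20-l+1$ reduces to the case where $a_i$ lies at position $10+l$ by the reflection $\Pi\leftrightarrow\Pi^*$: this reflection preserves gate $15$ (the halving) and sends the right-hand positions of the middle third to the left-hand ones, so halving swap counts are preserved. Henceforth fix $a_i$ at position $10+l$, with $l\in\{1,2,3\}$.

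For the first summand, note that any halving swap of $a_i$ with $a_j$ ($j<i$) occurring before $\pi_{a_{10}}$ moves $a_j$ one step to the left of $a_i$, and since every pair of elements swaps exactly once during the half-period, $a_j$ must still lie to the left of $a_i$ at $\pi_{a_{10}}$. By Lemma~\ref{lem:restrcent}, the positions strictly to the left of $a_i$ in the middle third are exactly $\{11,12,\ldots,9+l\}$, a set of cardinality $l-1$, so at most $l-1$ of the $a_j$'s with $j<i$ can be among them. Hence $\hal^{<}(a_i)\le l-1$.

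The crux of the proof is the second summand: I claim $\hal^{>}(a_i)\le 1$. Because $a_i$ moves from position $10+l\le 13$ at $\pi_{a_{10}}$ to its final position $20+i\ge 21$, its net number of crossings of gate $15$ after $\pi_{a_{10}}$ equals $+1$. So if $\hal^{>}(a_i)\ge 2$, then $a_i$ must also perform at least one leftward crossing of gate $15$ after $\pi_{a_{10}}$, and this crossing is necessarily an $aa$-swap of $a_i$ with some $a_u$, $u>i$. To reach the contradiction I would (i) use Lemma~\ref{lem:restrcent} to locate the $\pi_{a_{10}}$-positions of the candidates $a_u$ ($a_{10}$ at $14$, the $a_7,a_8,a_9$ at $15,16,17$, and $a_6$ at $13$ or $18$); (ii) use Remark~\ref{remark1} to extend Lemma~\ref{lem:11-13} to the side gates $17,18,19$, so that every monochromatic swap occurring there must involve one of $a_8,a_9,a_{10}$; and (iii) use Lemma~\ref{remark2} on the descending order of last exits from the $13$-center, to bound the times at which $a_u$ can be at position $15$ while $a_i$ is at $16$. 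Together these constraints force the smaller $a_j$'s that should occupy position $16$ just before each of the two halvings of $a_i$ to compete for the same side-gate resources (their allowed $aa$-swaps with $a_8,a_9,a_{10}$), and no chronological arrangement of the relevant swaps is consistent with both halvings happening after $\pi_{a_{10}}$.

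Putting the two bounds together yields $\hal(a_i)=\hal^{<}(a_i)+\hal^{>}(a_i)\le(l-1)+1=l$, as required. The main obstacle is the second step: unlike the simple pigeonhole argument that gives $\hal^{<}(a_i)\le l-1$, the inequality $\hal^{>}(a_i)\le 1$ is not elementary and requires a careful chronological tracking of $a_i$, the intervening $a_u$, and the smaller $a_j$'s that must migrate from positions $\{18,19,20\}$ back into the $13$-center, genuinely using the monochromatic restrictions on the side gates and the ascending/descending enter/leave rules of Lemma~\ref{remark2}.
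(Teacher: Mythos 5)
Your reduction of the right--hand case to the left--hand case via $\Pi^*$ and your bound $\hal^{<}(a_i)\le l-1$ (at most the $l-1$ positions $11,\dots,9+l$ can hold smaller--indexed $a$'s that have already passed $a_i$) are fine and agree with the paper's proof. The gap is your second step, $\hal^{>}(a_i)\le 1$. This is never proved: items (i)--(iii) are a list of tools followed by the assertion that ``no chronological arrangement of the relevant swaps is consistent,'' which is exactly the statement that would need a proof. Worse, the claim is stronger than what the lemma requires and there is no reason to believe it in general. If, say, $a_3$ sits at position $13$ in $\pi_{a_{10}}$ with $a_4$ and $a_5$ behind it at positions $11,12$, then after $\pi_{a_{10}}$ the element $a_3$ can move to position $15$, halve with $a_2$, be pushed back from $16$ to $15$ by the swap with $a_4$ (a legitimate halving contributing to $\hal(a_4)$, forbidden by none of Lemmas~\ref{lem:11-13}, \ref{remark2}, \ref{lem:restrcent}), and then halve with $a_1$; nothing you cite excludes $\hal^{>}(a_3)=2$, and indeed the lemma's conclusion $\hal(a_3)\le 3$ tolerates it.

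The paper avoids this by making the two counts trade off through a single parameter instead of using the fixed split $(l-1)+1$. Let $j$ be the number of smaller--indexed $a$'s among the $l-1$ elements behind $a_i$ in $\pi_{a_{10}}$. Halvings of $a_i$ that already occurred involve smaller $a$'s now to its left, hence at most $j$ of them. For future halvings, note that after $\pi_{a_{10}}$ only the $(l-1)-j$ larger--indexed $a$'s still behind $a_i$ can ever move it leftward (all $b$'s have already passed it, and $c$'s push it right), and each future halving with a smaller $a$ beyond the first forces one leftward crossing of the $15$th gate; hence at most $\bigl((l-1)-j\bigr)+1$ future halvings. Summing gives $j+\bigl((l-1)-j\bigr)+1=l$. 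So the correct repair of your argument is not to prove $\hal^{>}(a_i)\le 1$, but to replace your two absolute bounds by the $j$--dependent bounds $\hal^{<}(a_i)\le j$ and $\hal^{>}(a_i)\le (l-1)-j+1$.
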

\begin{proof}
We just prove the case when $a_i$ is at position $10 + l$, otherwise we look at $\Pi^*$. Let $B(a_i)$  be the set of $l-1$ $a$'s that are behind of $a_i$ in $\pi_{a_{10}} $. Let $j$ be the number of element in $B(a_i)$ with  index  smaller than $i$. This means that in $\pi_{a_{10}} $,  $a_i$ has already changed with each element of $B(a_i)$ with  index  smaller than $i$. Note that  these transpositions contribute at most $j$ to $\hal(a_i)$.  On the other hand,   each element of $B(a_i)$ with   index greater than $i$ moves $a_i$ to the left one time, then $a_i$ could make at most $((l-1)-j)+1$  transpositions in the $15$th--gate which involve an $a$  with index smaller  than $i$. Thus $\hal(a_i)\le j+(((l-1)-j)+1)=l$.
 \end{proof}

\begin{cor}\label{cor:no20}
$N_{15}^{aa}(\Pi) \leq 19$, $N_{15}^{bb}(\Pi) \leq 19$ and $N_{15}^{cc}(\Pi) \leq 19$
\end{cor}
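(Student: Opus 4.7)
The plan is to bound $N_{15}^{aa}(\Pi)\le 19$ by splitting the edge count of $D_{14}$ into low-index and high-index pieces, and then apply Remark~\ref{remark1} to transfer the bound to $B$ and $C$. Write $d_i := \hal(a_i)$, so that $N_{15}^{aa}(\Pi) = \sum_{i=1}^{10} d_i$ and we are working inside $\mathcal{D}_{10,1}$, whose maximum edge count is $20$ by Balogh--Salazar; the task is to beat this by one.

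For the high-index piece, I would apply the defining inequality of $\mathcal{D}_{10,1}$, namely $d_i\le 1+[i]^-$, to $i=6,\dots,10$ and sum to obtain $\sum_{i=6}^{10} d_i \le 5+\sum_{i=6}^{10} [i]^-$. The right-hand sum counts edges of the induced sub-digraph on $\{6,\dots,10\}$, because a vertex in that set cannot receive an edge from $\{1,\dots,5\}$ (edges go from larger to smaller index). That sub-digraph inherits the defining inequality, so it lies in $\mathcal{D}_{5,1}$; by Balogh--Salazar it has at most $|E(D_0(5,1))|=6$ edges. Therefore $\sum_{i=6}^{10} d_i\le 5+6=11$.

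For the low-index piece, I would unpack Lemma~\ref{lem:restrcent}: in $\pi_{a_{10}}$ the positions $11,12,13,18,19,20$ are occupied, in that order, by $a_{u_1},a_{u_2},a_{u_3},a_{u_4},a_{u_5},a_{u_6}$ with $u_1,u_6\le 4$, $u_2,u_5\le 5$, and $u_3,u_4\le 6$. Since these six $u_j$'s are distinct elements of $\{1,\dots,6\}$, the index $6$ can only sit at position $13$ or $18$, so exactly one of $u_3,u_4$ equals $6$ and the remaining five indices form $\{1,2,3,4,5\}$. Applying Lemma~\ref{lem:hal(a_i)} to each of those five $a$'s (the hypothesis $i\le 5$ is met), one gets $d_{u_1},d_{u_6}\le 1$, $d_{u_2},d_{u_5}\le 2$, and the element of $\{u_3,u_4\}$ different from $6$ satisfies $d\le 3$, giving $\sum_{i=1}^{5} d_i \le 1+1+2+2+3 = 9$.

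The final refinement from $9$ to $8$ is the observation $d_1=0$ (vertex $1$ has no smaller index to point at). Since $1$ belongs to $\{1,\dots,5\}$, it occupies one of the six boundary positions and therefore one of the five Lemma bounds above is automatically replaced by $0$; the largest resulting sum occurs when the lost bound is the smallest one, $1$, yielding $\sum_{i=1}^5 d_i \le 0+1+2+2+3 = 8$. Combining, $N_{15}^{aa}(\Pi) \le 8+11 = 19$, and Remark~\ref{remark1} transfers the bound to $N_{15}^{bb}(\Pi)$ and $N_{15}^{cc}(\Pi)$. The main obstacle is the bookkeeping in the low-index piece: recognising that Lemma~\ref{lem:restrcent} pins $a_6$ into one of positions $13,18$ so that Lemma~\ref{lem:hal(a_i)} covers all of $a_1,\dots,a_5$, and then squeezing out the extra unit from $d_1=0$ that is precisely what drops the bound from $20$ to $19$.
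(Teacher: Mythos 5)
Your proposal is correct and takes essentially the same approach as the paper: both arguments bound the edges of $D_{14}$ with tail in $\{a_{10},\dots,a_6\}$ by $5+6=11$ via the degree condition of $\mc{D}_{10,1}$ and the Balogh--Salazar extremal digraph, and bound the contribution of $a_1,\dots,a_5$ by $8$ using Lemma~\ref{lem:restrcent} together with Lemma~\ref{lem:hal(a_i)}, then invoke Remark~\ref{remark1} for $B$ and $C$. Your only (harmless) bookkeeping difference is obtaining the low-index total $8$ from the positional bounds $1+1+2+2+3$ minus the saving $\hal(a_1)=0$, where the paper instead uses $\hal(a_5)+\hal(a_4)\le 5$ plus the trivial bound of $3$ edges among $\{a_3,a_2,a_1\}$.
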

\begin{proof}
What we say for $A$ also apply for $B$ and $C$.  By Lemmas~\ref{lem:restrcent} and~\ref{lem:hal(a_i)}, $\hal(a_4) + \hal(a_5) \leq 5$ and hence the digraph $D_{14}$ associated to $N_{> 14}^{aa}(\Pi)$ has at most $19$ edges: at most $5$ edges with tail in $\{a_{10}, a_9, a_8, a_7, a_6 \}$ and head in $\{ a_5, a_4, a_3, a_2, a_1 \}$, at most $6$ edges between the elements of $\{a_{10}, a_9, a_8, a_7, a_6 \}$, at most $5$ edges with tail in $\{ a_5, a_4 \}$, and at most $3$ edges between the elements of $\{a_3, a_2, a_1\}$.
\end{proof}

\begin{rem}\label{rem:19hal}
In fact, if we want to have $19$ halvings, thus $\hal(a_{10}) + \hal(a_9) + \cdots + \hal(a_6)$ must be $11$, $\hal(a_5) + \hal(a_4)$ must be $5$ and $\hal(a_3) + \hal(a_2) + \hal(a_1)$ must be $3$. The later means that $a_3, a_2, a_1$ have to change in the $15$th--gate.
\end{rem}

\begin{cor}\label{cor:2orilla}
If $N_{15}^{aa}(\Pi) = 19$, then in the permutation $\pi_{a_{10}} $ of $\Pi$ in which $a_{10}$ enters in the $13$--center, $a_1$ and $a_2$ are at positions $11$ and $20$, respectively, or vice versa.
\end{cor}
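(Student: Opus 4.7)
\noindent\emph{Proof plan for Corollary~\ref{cor:2orilla}.}

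The plan is to combine the halving count forced by Remark~\ref{rem:19hal} with the position bounds from Lemma~\ref{lem:hal(a_i)} to rule out $a_3$ and $a_4$ from the two extreme positions of the middle third in $\pi_{a_{10}}$, and then use Lemma~\ref{lem:restrcent} to see that positions $11$ and $20$ can only be occupied by elements of $\{a_1,a_2,a_3,a_4\}$. First, I would set up the arithmetic: by Lemma~\ref{lem:restrcent}, position $14$ is $a_{10}$, positions $15,16,17$ are a permutation of $\{a_7,a_8,a_9\}$, and positions $11,12,13,18,19,20$ are exactly occupied by $a_1,\ldots,a_6$, with the index constraints $a_{\le 4},a_{\le 5},a_{\le 6},a_{\le 6},a_{\le 5},a_{\le 4}$.

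Next, I would extract from Remark~\ref{rem:19hal} that $\hal(a_1)+\hal(a_2)+\hal(a_3)=3$. Since $\hal(a_1)=0$ and $\hal(a_2)\le 1$ by definition, this forces $\hal(a_2)=1$ and $\hal(a_3)=2$. Applying Lemma~\ref{lem:hal(a_i)} with $l=1$, the value $\hal(a_3)=2$ rules out $a_3$ sitting at position $11$ or $20$ of $\pi_{a_{10}}$.

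Then I would do the analogous argument for $a_4$ via $a_5$. The index constraints of Lemma~\ref{lem:restrcent} force $a_5$ into one of the positions $12,13,18,19$, where Lemma~\ref{lem:hal(a_i)} gives $\hal(a_5)\le 3$. Combined with Remark~\ref{rem:19hal}'s equality $\hal(a_4)+\hal(a_5)=5$, this yields $\hal(a_4)\ge 2$, and Lemma~\ref{lem:hal(a_i)} with $l=1$ then prevents $a_4$ from occupying position $11$ or $20$ either.

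Finally, since by Lemma~\ref{lem:restrcent} positions $11$ and $20$ of $\pi_{a_{10}}$ must both be filled by elements of $\{a_1,a_2,a_3,a_4\}$, and we have just excluded $a_3$ and $a_4$, these positions must be occupied by $a_1$ and $a_2$ in one of the two orders, which is precisely the claim. The only subtle step is verifying $\hal(a_5)\le 3$, which I expect to be the main (mild) obstacle: one has to be careful to observe that the constraint ``$a_5$ cannot be at positions $11$ or $20$'' already comes from Lemma~\ref{lem:restrcent} (since index $5>4$), independently of any halving assumption, so that Lemma~\ref{lem:hal(a_i)} with $l\le 3$ always applies to $a_5$.
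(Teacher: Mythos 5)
Your proposal is correct and takes essentially the same route as the paper's proof: it combines Lemma~\ref{lem:restrcent}, Remark~\ref{rem:19hal} and Lemma~\ref{lem:hal(a_i)} to get $\hal(a_3)=2$ and (via $\hal(a_5)\le 3$) $\hal(a_4)\ge 2$, which excludes $a_3$ and $a_4$ from positions $11$ and $20$, positions that by Lemma~\ref{lem:restrcent} can only hold indices at most $4$. You merely make explicit the $\hal(a_5)\le 3$ step that the paper leaves implicit and bypass its short case analysis on where $a_4,a_5,a_6$ sit, but the ingredients and logic are the same.
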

\begin{proof}
From  Lemma~\ref{lem:hal(a_i)} and Remark~\ref{rem:19hal} it follows that $a_4$ is not at position $11$ or $20$ in
$\pi_{a_{10}} $. On the other hand,  by Lemma~\ref{lem:restrcent} we know that  $a_6$ is at position $13$
(position $18$), then $a_4, a_5$ occupy the positions $18$ and $19$ (positions $12$ and $13$) or they occupy the positions $12$ and $18$ (positions $13$ and $19$), not necessarily in that order. Because $\hal(a_3)$ must be $2$, then, by the Lemma~\ref{lem:hal(a_i)} and with the prior discussion, $a_3$ must be at position $12$  or $19$. So we get that $a_1, a_2$ are at positions $11$  and $20$, not necessarily in that order.
\end{proof}

Before proceeding with the proof of Theorem~\ref{thm:second}, we need to establish two more lemmas.

\begin{lem}\label{lem:5mid}
Let $\pi_{a_{10}} , \pi_{c_{10}} $ and $\pi_{b_{10}} $ be the permutations of $\Pi$ where $a_{10}, c_{10}$ and $b_{10}$ enter in the $13$--center, respectively. If $a_5$ is at position $12$ or $19$ in $\pi_{a_{10}} $, then $N_{15}^{aa}(\Pi) < 19$, $N_{15}^{bb}(\Pi) < 19$ and $N_{15}^{cc}(\Pi) < 19$.
\end{lem}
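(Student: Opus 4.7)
The plan is to argue all three inequalities by contradiction, proving $N_{15}^{aa}(\Pi)<19$ in detail and then transferring the argument to $N_{15}^{bb}$ and $N_{15}^{cc}$ via the $3$-decomposable phase structure. Since $\Pi$ and its reverse half-period $\Pi^{*}$ have identical combinatorial statistics, I may assume without loss of generality that $a_5$ occupies position $12$ in $\pi_{a_{10}}$ (the position-$19$ case passes through $\Pi^{*}$).

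For the $A$-case, suppose $N_{15}^{aa}(\Pi)=19$. Lemma~\ref{lem:hal(a_i)} with $l=2$ gives $\hal(a_5)\le 2$, and Remark~\ref{rem:19hal} then forces $\hal(a_5)=2$, $\hal(a_4)=3$, and $\hal(a_1)+\hal(a_2)+\hal(a_3)=3$. A second application of Lemma~\ref{lem:hal(a_i)} pins $a_4$ at position $13$ or $18$ of $\pi_{a_{10}}$, while Corollary~\ref{cor:2orilla} places $\{a_1,a_2\}$ at $\{11,20\}$. Position $19$ must then carry an $a_{\le 5}$-element distinct from $a_1,a_2,a_4,a_5$, so it is $a_3$, and $a_6$ fills whichever of $\{13,18\}$ is not occupied by $a_4$. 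With this rigid picture I will follow the evolution of $\Pi$ after $\pi_{a_{10}}$: $a_4$ must reach gate $15$ and meet each of $a_1,a_2,a_3$ there, $a_5$ must reach gate $15$ and meet two elements of $\{a_1,a_2,a_3,a_4\}$, and the three pairs inside $\{a_1,a_2,a_3\}$ must each swap at gate $15$. Tracking these motions with Lemma~\ref{remark2} (which controls when $a$-, $b$- and $c$-elements enter and leave the $13$-center) and Proposition~\ref{prop:perfect} (which forbids confined or non-discovery swaps in the $A$- and $C$-zones) is expected to show that the required schedule of gate-$15$ transpositions cannot be realised, contradicting $N_{15}^{aa}(\Pi)=19$.

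For $N_{15}^{bb}(\Pi)<19$ and $N_{15}^{cc}(\Pi)<19$ I intend to transfer the information extracted from $\pi_{a_{10}}$ to $\pi_{b_{10}}$ and $\pi_{c_{10}}$ by chasing the $3$-decomposable phase ordering ($AB$ first, then $AC$, then $BC$) together with the gate assignments supplied by Lemma~\ref{remark2}. The expected outcome is that $c_5$ and $b_5$ are forced into position $12$ or $19$ of the respective entry permutations, after which the $A$-case argument applies verbatim via the cyclic symmetry in Remark~\ref{remark1}. The main obstacle will be the bookkeeping in the $A$-case: a short case analysis over which of $\{13,18\}$ houses $a_4$ and which two of $\{a_1,a_2,a_3,a_4\}$ are the gate-$15$ partners of $a_5$ must rule out every consistent ordering of the required gate-$15$ swaps against perfectness, the bichromatic halving count of Lemma~\ref{lem:bihal}, and the $3$-decomposable phase structure. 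Once this combinatorial check is settled, extending it to $B$ and $C$ amounts to translating positions through the phase ordering.
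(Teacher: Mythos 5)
Your reduction to the case where $a_5$ sits at position $12$, and the rigid picture you extract from $N_{15}^{aa}(\Pi)=19$ (namely $\hal(a_5)=2$, $\hal(a_4)=3$, $a_4$ at $13$ or $18$, $a_1,a_2$ at $11$ and $20$, $a_3$ at $19$, $a_6$ at the remaining slot), are all correct and consistent with Lemmas~\ref{lem:restrcent} and~\ref{lem:hal(a_i)}, Remark~\ref{rem:19hal} and Corollary~\ref{cor:2orilla}. But the heart of the lemma --- actually deriving a contradiction --- is never carried out: you say that tracking the required gate-$15$ swaps ``is expected to show'' impossibility, and that for $B$ and $C$ ``the expected outcome'' is that $b_5,c_5$ land at position $12$ or $19$ so the (unfinished) $A$-argument can be rerun. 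That is precisely the content of the lemma, so as written there is a genuine gap; moreover the cited tool for the local scheduling argument is off, since Proposition~\ref{prop:perfect} constrains transpositions in the $A$-- and $C$--Zones, while the swaps you need to obstruct occur in the middle third, where the operative restrictions are Lemmas~\ref{lem:11-13}, \ref{remark2} and~\ref{lem:restrcent}. It is not at all clear that a contradiction confined to the $a$'s exists, because the $a$'s are carried through the centre only by $b$'s and $c$'s.

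The paper's proof does not assume any halving count and does not argue locally inside $A$. It propagates positional information around the three classes: since no $aa$-transposition can occur to the left of the $13$--center after $\pi_{a_{10}}$, the element $a_5$ at position $12$ must be pushed into the $13$--center by two $ac$-swaps, and Lemma~\ref{remark2} forces these to be with $c_1$ and $c_2$; hence $c_1,c_2$ occupy positions $11,12$ of $\pi_{c_{10}}$ and some $c_j$ with $j\in\{3,4\}$ occupies position $20$. That $c_j$ must then be carried in by three $b$'s of index at most $4$, pinning $b_5,b_6$ at positions $12,13$ of $\pi_{b_{10}}$ and those three $b$'s at $18$--$20$; iterating the same step into the permutations $\pi_{x_{10}+\binom{30}{2}}$ shows that $a_1,a_2$ sit at positions $19,20$ of $\pi_{a_{10}}$, $c_1,c_2$ at $11,12$ of $\pi_{c_{10}}$, and $b_1,b_2$ at $19,20$ of $\pi_{b_{10}}$. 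Each of these three configurations contradicts Corollary~\ref{cor:2orilla} (which would put $x_1,x_2$ at the opposite extremes $11$ and $20$), giving all three inequalities at once. If you want to salvage your plan, you must either complete the scheduling contradiction you only sketch, or replace it by this cross-colour propagation; as it stands the proposal proves none of the three inequalities.
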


\begin{proof}
Suppose that  $a_5$ is at position $12$ in $\pi_{a_{10}} $ (the case when $a_5$ is at position $19$ is the same if we see $\Pi^*$). So $\pi_{a_{10}} $ looks like
\begin{equation}\label{p-a}
\pi_{a_{10}} =(B, a_{i_1}, a_5, a_{i_2}| a_{10} - - - | a_{i_3}, a_{i_4}, a_{i_5}, C).
\end{equation}

Since there are no $aa$--transpositions after of $\pi_{a_{10}} $ on the left hand side of the $13$--center,  $a_5$ moves  to the $13$--center by means of two $ac$--transpositions.  By Lemma~\ref{remark2}, the swap between $a_5$ and $c_3$ occurs in the $17$th--gate, and hence, $a_5$ is moved from the position $12$ to $13$--center by $c_1$ and $c_2$. On the other hand, because all the transpositions between elements of $\{c_1, c_2,\ldots ,c_7\}$ or between elements of  $\{c_7, c_8, c_9, c_{10}\}$  occur in the $13$--center, then when $c_{10}$ enters in the $13$--center, $c_1$ and $c_2$ are at positions $11$ and $12$, not necessarily in that order. So $\pi_{c_{10}} $ looks like
\begin{equation}\label{p-c}
\pi_{c_{10}} =(B, c_{1 \mbox{ \tiny or } 2}, c_{2 \mbox{ \tiny or } 1}, c_{j_1} | - - - c_{10} | c_{j_2}, c_{j_3}, c_{j_4}, A),
\end{equation}
and by Lemma~\ref{lem:restrcent}, $j_4\in \{3, 4\}$.

Now we deduce some restrictions on $\pi_{b_{10}} $. As before, since there are no $cc$--transpositions after of $\pi_{c_{10}}$ on the right hand side of the $13$--center,  $c_{j_4}$ moves  to the $13$--center by means of three $bc$--transpositions. By Lemma~\ref{remark2}, the swap between $c_{j_4}$ and $b_{7-j_4+1}$ occurs in the $13$th--gate, and hence, $c_{j_4}$ is moved from the position $20$ to $13$--center by three $b$'s, say  $b_{k_1}, b_{k_2}, $ and $b_{k_3}$, such that $k_1, k_2, k_3 < 7-j_4+1 \le 5$. Thus, when $\pi_{b_{10}} $ occurs, $b_{k_1}, b_{k_2}$ and $b_{k_3}$ are at positions $18, 19$ and $20$. So  $\pi_{b_{10}} $ looks like $(C, b_{k_6}, b_{k_5}, b_{k_4} | b_{10} - - - | b_{k_3}  , b_{k_2}, b_{k_1}, A)$.  Thus, by Lemma~\ref{lem:restrcent}, $k_4 = 6$ and $k_5 = 5$ and
$\pi_{b_{10}} $ looks like
\begin{equation}\label{p-b}
\pi_{b_{10}} =(C, b_{k_6}, b_5, b_6|b_{10} - - - | b_{k_3}, b_{k_2}, b_{k_1}, A).
\end{equation}

In a similar way that (\ref{p-c}) was obtained from (\ref{p-a}), it is possible to obtain (\ref{p+a})  (respectively, (\ref{p+b})) from (\ref{p-b})   (respectively, (\ref{p+c})); (\ref{p+c})  can be obtained
 from (\ref{p+a}) like (\ref{p-b}) was obtained from (\ref{p-c}).
\begin{equation}\label{p+a}
\pi_{a_{10} + \binom{30}{2}} = (C, a_{1 \mbox{ \tiny or } 2}, a_{2 \mbox{ \tiny or } 1}, a_{i_3}| - - - a_{10} | a_{i_2}, a_5, a_{i_1}, B).
\end{equation}
\begin{equation}\label{p+c}
\pi_{c_{10} + \binom{30}{2}} = (A, c_{j_4}, c_5, c_6| c_{10} - - - | c_{j_1}, c_{2 \mbox{ \tiny or } 1}, c_{1 \mbox{ \tiny or } 2}, B).
\end{equation}
\begin{equation}\label{p+b}
\pi_{b_{10} + \binom{30}{2}} = (A, b_{1 \mbox{ \tiny or } 2}, b_{2 \mbox{ \tiny or } 1}, b_{p}| - - - b_{10} | b_6, b_5, b_{k_6}, C).
\end{equation}
The desired result is immediate from (\ref{p+a}), (\ref{p+c}), (\ref{p+b}) and Corollary~\ref{cor:2orilla}.
\end{proof}

\begin{lem}\label{lem:1-2}
Let $\pi_{a_{10}} , \pi_{c_{10}} $ and $\pi_{b_{10}} $ as in
Lemma~\ref{lem:5mid}. If  $N_{15}^{aa}(\Pi) =19$ and for $x=a,b,c$; $x_j$ occupies the $11$th-- or $20$th--position in  $\pi_{x_{10}}$,  then $j\in \{1,2\}$.
\end{lem}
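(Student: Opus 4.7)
The plan is to split into cases according to $x\in\{a,b,c\}$ and exploit the asymmetric hypothesis $N_{15}^{aa}(\Pi)=19$ by chaining the resulting restriction on $\pi_{a_{10}}$ forward to $\pi_{c_{10}}$ and $\pi_{b_{10}}$, in the same spirit as the computation of equations (\ref{p-a})--(\ref{p+b}) in the proof of Lemma~\ref{lem:5mid}. The case $x=a$ is immediate: Corollary~\ref{cor:2orilla} applies directly, placing $a_1$ and $a_2$ at positions $11$ and $20$ of $\pi_{a_{10}}$, so any $a_j$ at position $11$ or $20$ must satisfy $j\in\{1,2\}$.

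For $x=c$, after replacing $\Pi$ by its reverse $\Pi^*$ if necessary, I would assume $a_1$ occupies position $20$ of $\pi_{a_{10}}$. By Lemma~\ref{lem:11-13} no low-index $aa$-transposition occurs at gates $17$, $18$, $19$ after $\pi_{a_{10}}$, so each leftward step of $a_1$ from position $20$ toward the $13$-center must come from an $ac$-swap. Lemma~\ref{remark2}(2) forces $a_1$ to eventually leave the $13$-center through the $17$th-gate by swapping with $c_7$; before that it must be pushed from position $20$ to position $17$ by exactly three $c$'s, and since the $c$'s enter the $13$-center from the right in ascending order (Lemma~\ref{remark2} again), these three are necessarily $c_1,c_2,c_3$. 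Consequently, by the time $c_{10}$ enters the $13$-center — that is, at $\pi_{c_{10}}$ — the elements $c_1,c_2,c_3$ have already traversed the $13$-center, and by Lemma~\ref{lem:restrcent} applied to $C$ via Remark~\ref{remark1} they must occupy positions $11,12,13$ of $\pi_{c_{10}}$. A symmetric tracking of $a_2$ (which sits at position $11$ of $\pi_{a_{10}}$) identifies the occupant of position $20$ of $\pi_{c_{10}}$ as a small-index $c$; combining these constraints with the analogue of Corollary~\ref{cor:2orilla} for $C$, obtained along the chain, pins the occupants of positions $11$ and $20$ of $\pi_{c_{10}}$ down to $\{c_1,c_2\}$. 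Iterating one more step of the chain from $\pi_{c_{10}}$ to $\pi_{b_{10}}$, exactly as in the passage from (\ref{p-c}) to (\ref{p-b}) in Lemma~\ref{lem:5mid}, yields the corresponding conclusion for $x=b$.

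The principal obstacle is verifying that the chain of inferences is uniquely determined at each step: in particular, that the three $c$'s pushing $a_1$ leftward are forced to be exactly $c_1,c_2,c_3$ (and not some other triple still allowed by the coarser statements of Lemmas~\ref{lem:restrcent} and~\ref{lem:hal(a_i)}), and that upon reaching $\pi_{c_{10}}$ these $c$'s land at the specific leftmost positions rather than merely somewhere in the left half of the $10$-center. Both requirements follow from joint, careful use of Lemmas~\ref{lem:restrcent}, \ref{lem:11-13}, \ref{remark2}, and~\ref{lem:5mid}, and the bookkeeping is precisely of the type already carried out for equations (\ref{p-a})--(\ref{p+b}); no fundamentally new technical input is required.
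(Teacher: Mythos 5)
Your case $x=a$ is fine (Corollary~\ref{cor:2orilla} gives it at once), but the engine of your case $x=c$ runs in the wrong direction. In an $ac$--transposition the $a$ moves right and the $c$ moves left (each $a$ must end up to the right of every $c$); this is exactly how such swaps are used in the proof of Lemma~\ref{lem:5mid}, where $c_1,c_2$ push $a_5$ \emph{rightward} from position $12$ into the $13$--center. Hence a $c$ can never produce a leftward step of $a_1$ from position $20$ toward the $13$--center. The only transpositions that move $a_1$ left after $\pi_{a_{10}}$ are monochromatic swaps with higher-indexed $a$'s, and at gates $17$, $18$, $19$ these are precisely the swaps with $a_8,a_9,a_{10}$ that Lemma~\ref{lem:11-13} \emph{allows} (your phrase ``no low-index $aa$--transposition'' rules out only swaps between two low-index $a$'s, which is not the relevant alternative). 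So the conclusion that $a_1$ is pushed in by $c_1,c_2,c_3$, and with it the placement of $c_1,c_2,c_3$ at positions $11,12,13$ of $\pi_{c_{10}}$, does not follow; moreover, even if the pushers were $c$'s, comparing with $a_1$'s exit partner $c_7$ would only bound their indices by $6$, not by $3$.

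The paper's argument differs on both fronts. For position $11$ of $\pi_{c_{10}}$ it tracks the element $a_t$ at position $13$ of $\pi_{a_{10}}$ (which is $a_5$ or $a_6$ by Lemmas~\ref{lem:restrcent} and~\ref{lem:5mid}): $a_t$ re-enters the center pushed rightward by a single $c_r$, and since $a_t$ leaves with $c_{8-t}$ at the $17$th--gate while the $c$'s enter in ascending order, $r\le 7-t\le 2$; this $c_r$ has no monochromatic transpositions to the left of the center before $\pi_{c_{10}}$ (Lemma~\ref{lem:11-13}), so it is the occupant of position $11$. For position $20$ of $\pi_{c_{10}}$ your ``symmetric tracking of $a_2$'' cannot deliver the claim, since the $c$'s that push $a_2$ end up on the left side of $\pi_{c_{10}}$, not at position $20$; the paper instead assumes $c_j$ with $j\in\{3,4\}$ sits there, deduces that the three $b$'s of index at most $4$ that push $c_j$ back into the center must occupy positions $18,19,20$ of $\pi_{b_{10}}$, hence $b_5$ sits at position $12$ there, and Lemma~\ref{lem:5mid} together with Remark~\ref{remark1} then contradicts $N_{15}^{aa}(\Pi)=19$. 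Your proposal contains no substitute for this second, contradiction-based half, and the chaining to $x=b$ inherits the same directional error.
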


\begin{proof} We only prove  the case $x=c$ (the cases $x=a$ and $x=b$ are analogous). Suppose that $c_j$ occupies the $11$th-- or $20$th--position in $\pi_{c_{10}} $.

\noindent{\sc{Case 1}}. $c_j$ occupies the $11$th--position in $\pi_{c_{10}} $. Suppose that $a_t$ occupies the $13$th--position in $\pi_{a_{10}} $. By Lemma~\ref{lem:5mid} we know that  $t\in \{5,6\}$.

By Lemma~\ref{remark2}, the swap between $a_{t}$ and $c_{7-t+1}$ occurs in the $17$th--gate, and hence,  $a_{t}$ is moved from the position $13$ to $13$--center by a $c_r$ such that $r\leq7-t\leq 2$. On the other hand, by Lemma~\ref{lem:11-13} we know that $c_r$ does not have monochromatic transpositions on the left hand side of the $13$--center until after $\pi_{c_{10}} $ occurs. Thus $c_r=c_j$.

\noindent{\sc{Case 2}}. $c_j$ occupies the $20$th--position in $\pi_{c_{10}} $. Seeking a contradiction, suppose that  $j\notin \{1,2\}$. So by Lemma~\ref{lem:restrcent},  $j\in \{3,4\}$.
Again,  by Lemma~\ref{remark2}, the swap between $c_{j}$ and $b_{7-j+1}$ occurs in the $13$th--gate, and hence, $c_{j}$ is moved from the position $20$ to $13$--center by three $b$'s, say  $b_{j_1}, b_{j_2}, $ and $b_{j_3}$, such that $j_1, j_2, j_3< 7-j+1\le 5$. It follows from Lemma~\ref{lem:11-13} that none of $b_{j_1}, b_{j_2}, $ and $b_{j_3}$ moves until after $\pi_{b_{10}} $ occurs. This implies that $b_{j_1}, b_{j_2}, $ and $b_{j_3}$ occupy the positions $18,19$ and $20$ in $\pi_{b_{10}} $. By Lemma~\ref{lem:restrcent}, $b_5$ is in the $12$th--position and by Remark~\ref{remark1} and Lemma~\ref{lem:5mid}, $N^{aa}_{15}(\Pi) < 19$.
\end{proof}

\section{The Rectilinear Crossing Number of \texorpdfstring{$K_{30}$}{K30}: Proof of Theorem~\ref{thm:second}}\label{sec:proofk30}

Let $\pi_{a_{10}} $, $\pi_{b_{10}} $ and $\pi_{c_{10}} $ as in
Lemma~\ref{lem:5mid}. By Lemmas~\ref{lem:5mid} and~\ref{lem:1-2}, if $N_{15}^{aa}(\Pi) = 19$ then, without loss of generality, $\pi_{a_{10}} $looks like
\begin{equation}\label{2-a}
\pi_{a_{10}} =(B, a_{i_1}, a_{i_2}, a_6| a_{10} - - - | a_5, a_{i_3}, a_{i_4}, C),
\end{equation}
with $\{i_1,i_4\}=\{1,2\}$, otherwise we look $\Pi^*$, besides in the $13$--center are $ a_9, a_8, a_7$ in some order.

By Lemma~\ref{remark2}, $a_6$ leaves the $13$--center with $c_2$, so $a_6$ re-enters in the $13$--center with the transposition with $c_1$. Thus $c_1$ occupies the  $11$th--position  of  $\pi_{c_{10}} $. So  by  Lemma~\ref{lem:1-2}, $\pi_{c_{10}} $ looks like
\begin{equation}\label{2-c}
\pi_{c_{10}} =(B, c_1, c_{j_1}, c_{j_2} | - - - c_{10} | c_{j_3}, c_{j_4}, c_2, A).
\end{equation}
Again, since $b_6$ enters in the $13$--center with the swap with $c_2$, $\pi_{b_{10}} $ looks like $(C, b_{k_1}, b_{k_2}, b_{k_3} |b_{10} - - - | b_{k_4}, b_{k_5}, b_{k_6}, A)$ with $k_4, k_5, k_6 \leq 5$. Thus, by Lemmas~\ref{lem:restrcent} and~\ref{lem:5mid}, $\pi_{b_{10}} $ looks like
\begin{equation}\label{2-b}
\pi_{b_{10}} =(C, b_{k_1}, b_{k_2}, b_6 |b_{10} - - - | b_5, b_{k_5}, b_{k_6}, A).
\end{equation}

In a similar way that~(\ref{2-c}) was obtained from~(\ref{2-a}), it is possible to obtain~(\ref{2+a})  (respectively,~(\ref{2+b})) from~(\ref{2-b})   (respectively,~(\ref{2+c}));~(\ref{2+c})  can be obtained
 from~(\ref{2+a}) like~(\ref{2-b}) was obtained from~(\ref{2-c}).

\begin{equation}\label{2+a}
\pi_{a_{10} + \binom{30}{2}} = (C, a_1, a_{i_3}, a_5 | - - - a_{10} | a_6, a_{i_2}, a_2, B).
\end{equation}
\begin{equation}\label{2+c}
\pi_{c_{10} + \binom{30}{2}} = (A, c_2, c_{j_4}, c_6 | c_{10}- - - | c_5, c_{j_1}, c_1, B).
\end{equation}
\begin{equation}\label{2+b}
\pi_{b_{10} + \binom{30}{2}} = (A, b_1, b_{k_5}, b_5 | - - - b_{10}| b_6, b_{k_2}, b_2, C).
\end{equation}

So we have only two cases, when $i_2$ equals to $3$ or $4$.

\noindent{\sc{Case } $i_2 = 4$}.
The permutation $\pi_{a_{10}}$ is $(B, a_2, a_4, a_6 | a_{10} - - - | a_5, a_3, a_1, C)$. By Lemma~\ref{remark2}, $a_4$ leaves the $13$--center with $c_4$, then $a_4$ must re-enters to the $13$--center with $c_3$ and therefore $\pi_{c_{10}}$ is $(B, c_1, c_3, c_5 | - - - c_{10} | c_6, c_4, c_2, A)$, and for similar reasons, the permutation $\pi_{b_{10}}$ is $(C, b_2, b_4, b_6 | b_{10} - - - |b_5, b_3, b_1, A)$.

\begin{claim}\label{claim:246}
If $\hal(a_3) + \hal(a_2) + \hal(a_1) = 3$, then $\hal(c_5) \leq 2$. Hence $N^{cc}_{15}(\Pi) \leq 18$
\end{claim}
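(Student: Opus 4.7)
I would argue by contradiction: suppose $\hal(c_5) \geq 3$, so that $c_5$ swaps in the $15$th-gate with at least three of $c_1,c_2,c_3,c_4$. The goal is to show this is incompatible with the hypothesis $\hal(a_3) + \hal(a_2) + \hal(a_1) = 3$, which forces all three pairs among $\{a_1,a_2,a_3\}$ to swap in the $15$th-gate.

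First I would pin down the trajectory of $c_5$. By Lemma~\ref{remark2}, $c_5$ enters the $13$-center via the $17$th-gate swap $a_3 \leftrightarrow c_5$ and leaves via the $13$th-gate swap $b_3 \leftrightarrow c_5$, and in $\pi_{c_{10}}$ it sits at position $13$. Consequently every halving $c_5 c_i$ takes place strictly between those two swaps. On the other hand, each of $a_1, a_2, a_3$ enters the $13$-center through the $13$th-gate (swapping with $b_7, b_6, b_5$ respectively) and leaves through the $17$th-gate (swapping with $c_7, c_6, c_5$). So the three pairwise $aa$-halvings $a_1a_2$, $a_1a_3$, $a_2a_3$ must all occur before $a_3$ exits the center, that is, before $c_5$ enters it.

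Next I would locate $c_1, c_2, c_3, c_4$ during $c_5$'s stay. From $\pi_{c_{10}}$, $c_1, c_3$ occupy positions $11, 12$ and $c_4, c_2$ occupy $19, 20$. By Lemma~\ref{lem:11-13} every monochromatic swap $c_i c_j$ with $i,j \le 7$ occurs in the $13$-center, and by the $c$-part of Lemma~\ref{remark2} these elements enter that center in ascending order. A careful time-slot count at positions $15, 16$ --- using that the three $aa$-halvings have already monopolised those cells during a window that closes exactly when $c_5$ enters --- should show that at most two of $c_1, c_2, c_3, c_4$ can meet $c_5$ at the $15$th-gate, giving $\hal(c_5) \le 2$.

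The hard part is the last step: a precise accounting of which element occupies each of positions $15, 16$ at each moment, respecting both the monotone enter/leave orders given by Lemma~\ref{remark2} and the forced placement of the three $aa$-halvings coming from the hypothesis. Once $\hal(c_5) \le 2$ is established, the bound $N^{cc}_{15}(\Pi) \le 18$ follows immediately: the $c$-analogue of Lemma~\ref{lem:hal(a_i)} applied to $c_4$ at position $19$ in $\pi_{c_{10}}$ gives $\hal(c_4) \le 2$, so $\hal(c_5) + \hal(c_4) \le 4$, which by the $c$-analogue of Remark~\ref{rem:19hal} falls strictly short of the value $5$ required to attain $N^{cc}_{15}(\Pi) = 19$.
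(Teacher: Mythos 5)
There is a genuine gap, and it sits exactly where the content of the claim lies. Your plan defers the key step ($\hal(c_5)\leq 2$) to an unspecified ``careful time-slot count,'' which you yourself flag as the hard part; nothing in the proposal actually carries it out. Worse, the scaffolding you set up for that count rests on a false assertion: you claim that all three $aa$-halvings $a_1a_2$, $a_1a_3$, $a_2a_3$ must occur before $a_3$ exits the $13$--center, i.e.\ before $c_5$ enters. That is only true for the two halvings involving $a_3$. The halving $a_1a_2$ merely requires $a_1$ and $a_2$ to be at the $15$th--gate, and in the configuration of the case $i_2=4$ it cannot have happened yet: at the moment $a_3$ swaps with $c_5$ (the $17$th--gate), the permutation is forced to be $(B,c_1,\{c_3,a_2\},\{c_2,c_4,a_1\},c_5,a_3,\ldots)$, so $a_2$ sits at position $12$ or $13$, still to the left of $a_1$, and their gate-$15$ swap is still pending \emph{after} $c_5$ has entered the center. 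This pending halving is precisely the lever of the paper's argument: since $a_2$ can only advance into the center by $c$'s passing it, the requirement that $a_2$ meet $a_1$ at the $15$th--gate forces $c_5$ to swap with $a_1$ at the $15$th--gate, and then $c_5$ cannot also meet $c_2$ or $c_4$ there, leaving only $c_1,c_3$ as possible gate-$15$ partners, whence $\hal(c_5)\leq 2$. Under your premise (all $aa$-halvings finished before $c_5$ enters) they would impose no constraint at all on whom $c_5$ meets at the $15$th--gate afterwards, so the proposed count has no mechanism to produce the bound $\hal(c_5)\le 2$.

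The peripheral parts of your proposal are fine and consistent with the paper: the entry/exit data for $c_5$ and for $a_1,a_2,a_3$ from Lemma~\ref{remark2} are quoted correctly, and your concluding step is sound and in fact more explicit than the paper's --- from $\hal(c_5)\leq 2$ together with $\hal(c_4)\leq 2$ (the $c$-analogue of Lemma~\ref{lem:hal(a_i)} applied to $c_4$ at position $19$ in $\pi_{c_{10}}$) one gets $\hal(c_5)+\hal(c_4)\leq 4<5$, so by Remark~\ref{rem:19hal} $N^{cc}_{15}(\Pi)\leq 18$. To repair the proof you should drop the ``window closes when $c_5$ enters'' premise, derive the displayed permutation at the moment of the $a_3c_5$ swap, and argue from the still-pending $a_1a_2$ halving as above.
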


\noindent{\em Proof of Claim~\ref{claim:246}}.
 Since $N^{aa}_{15}(\Pi) = 19$, by Remark~\ref{rem:19hal}, $\hal(a_3) + \hal(a_2) + \hal(a_1) = 3$. By Lemma~\ref{remark2}, $a_3$ leaves the $13$--center swapping with $c_5$, and the permutation is
 $$(B,c_1, \{c_3, a_2\} |\{c_2, c_4, a_1 \} c_5 | a_3, ...),$$
 where the notation $\{ \ \}$ means that $c_2, c_4, a_1$ occupy those positions, but not necessarily in that order, similarly for $a_2$ and  $c_3$. Because $a_2$ must to change with $a_1$ in the $15$th--gate, this is only possible if $c_5$ changes with $a_1$ in the $15$th--gate, but then $c_5$ does not change with neither $c_2$ or $c_4$ in the $15$th--gate, and therefore $\hal(c_5) \leq 2$. $N^{cc}_{15}(\Pi) \leq 18$ is a consequence of the Remark~\ref{rem:19hal}. This completes the proof of Claim~\ref{claim:246}.

 If $N^{cc}_{15}(\Pi) = 18$ and with the fact that $\hal(c_5) \leq 2$,  by Remark~\ref{rem:19hal}, we conclude that $\hal(c_3) + \hal(c_2) + \hal(c_1) = 3$. Since $\pi_{c_{10}}$ has the same configuration as $\pi_{a_{10}}$, named $(B, c_1, c_3, c_5 | - - - c_{10}| c_6, c_4, c_2, A)$ and also satisfies the hypotheses of Claim~\ref{claim:246}, we conclude that $N^{bb}_{15}(\Pi) \leq 18$. Now if $N^{bb}_{15}(\Pi) = 18$, $B$ satisfies the Claim~\ref{claim:246} too  and implies that $N^{aa}_{15}(\Pi) \leq 18$, which is a contradiction. Then $N^{aa}_{15}(\Pi) = 19, N^{cc}_{15}(\Pi) = 18$ and $N^{bb}_{15}(\Pi) \leq 17$.

 So we suppose that $N^{cc}_{15}(\Pi) \leq 17$. The only case we have to worry about is when $N^{bb}_{15}(\Pi) = 19$, but recall that when $b_{10}$ enters in the $13$--center, the permutation $\pi_{b_{10}}$ is
 $$\pi_{b_{10}} = (C, b_2, b_4, b_6 |b_{10} - - - | b_5, b_3, b_2, A)$$
 and $B$ holds the hypotheses of Claim~\ref{claim:246}, which implies that $N^{aa}_{15}(\Pi) \leq 18$, and this is a contradiction. Thus $N^{aa}_{15}(\Pi) = 19, N^{cc}_{15}(\Pi) \leq 17$ and $N^{bb}_{15}(\Pi) \leq 18$. 

\vspace*{12pt}

\noindent{\sc{Case } $i_2 = 3$}.
So,  $\pi_{a_{10}}=(B, a_2, a_3, a_6 | a_{10} - - - | a_5, a_4, a_1, C)$.
By Lemma~\ref{remark2}, $a_3$ leaves the $13$--center with $c_5$, then $a_3$ re-enters to $13$--center with $c_3$ or $c_4$.

Suppose that $a_3$ re-enter with $c_3$, then $\pi_{c_{10}}$ looks like
$$\pi_{c_{10}} = (B, c_1, c_3, c_5 | - - - c_{10}| c_6, c_4, c_2, A),$$
but $c_4$ leaves the $13$--center with $b_4$, then $c_4$ must re-enter with $b_3$, so we have
$$\pi_{b_{10}} = (C, b_2, b_4, b_6 | b_{10}- - -| b_5, b_3, b_1, A),$$
but again, $b_4$ leaves the $13$--center with $a_4$, so $b_4$ re-enters with $a_3$, and then we get
$$\pi_{a_{10} + \binom{30}{2}} = (C, a_1, a_3, a_5 | - - - a_{10}| a_6, a_4, a_2, B),$$
which is a contradiction. Thus $a_3$ re-enters to the $13$--center with $c_4$.

Here, just by convenience we work in $\Pi^*$. Let $\pi^*_{a_{10}}$ be the permutation of $\Pi^*$ where $a_{10}$ enters in the $13$--center. So, $$\pi^*_{a_{10}} = (C,a_1, a_4, a_5|a_{10} - - -| a_6, a_3, a_2, B).$$

\begin{claim}\label{claim:145}
$b_2$ does not change with $b_1$ or, if $\hal(a_5) = 3$ then $b_3$ does not change with $b_1$ in the $15$th--gate. Moreover, in both cases $N^{bb}_{15}(\Pi) \leq 18$.
\end{claim}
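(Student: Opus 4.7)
The ``moreover'' part follows at once from Remark~\ref{rem:19hal}: whichever of the two alternatives of the claim holds, at least one of the potential edges $b_2\to b_1$ or $b_3\to b_1$ in the digraph $D_{14}$ associated with $N_{>14}^{bb}(\Pi)$ is absent, so $\hal(b_3)+\hal(b_2)+\hal(b_1)\le 2$. This rules out $N^{bb}_{15}(\Pi)=19$ and hence forces $N^{bb}_{15}(\Pi)\le 18$.

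For the main disjunction I would argue by contradiction, supposing simultaneously that $b_2$ and $b_1$ swap at the $15$th-gate and that $\hal(a_5)=3$ together with $b_3$ and $b_1$ also swapping at the $15$th-gate. The first step is to localise the time window in which $b_1,b_2,b_3$ coexist inside the $13$-center. By Lemma~\ref{remark2} the $b_i$'s enter the $13$-center in ascending order via the $13$th-gate (each paired with the appropriate $c_{7-i+1}$) and leave in descending order via the $17$th-gate (each paired with the appropriate $a_i$); this frames their coexistence interval by the entry of $b_3$ on one side and the exit of $b_2$ on the other. The two assumed $bb$-swaps force $b_1$ to cross gate $15$ at least twice inside this interval.

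Next I would exploit $\hal(a_5)=3$. Since $a_5$ sits at position $13$ in $\pi^*_{a_{10}}$, Lemma~\ref{lem:hal(a_i)} is tight for $a_5$, so $a_5$ must perform three $15$th-gate transpositions with elements of $\{a_1,a_2,a_3,a_4\}$; by Lemma~\ref{lem:11-13} each such $aa$-transposition lies in the $13$-center. Translating this through the liberation order given by Lemma~\ref{remark2}, these three $a_5a_i$ swaps are forced into a subinterval that overlaps the $b_1b_2b_3$ coexistence window in a controlled manner, and I would pin down the position of $a_5$ at each relevant instant using the trajectories already fixed by the permutations $\pi^*_{b_{10}}$ and $\pi^*_{c_{10}}$ obtained from the symmetric analogues of the derivation of $\pi^*_{a_{10}}$.

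The main obstacle is the combinatorial bookkeeping: one must enumerate, within a short window of consecutive permutations of $\Pi^*$, the mandatory discovery swaps at gates $13$ and $17$, the three $a_5a_i$ swaps at gate $15$, and the two putative $b_1b_2$ and $b_1b_3$ swaps at gate $15$, and show that no consistent scheduling exists. I expect the clean contradiction to come from a counting or positional argument: the $15$th-gate transpositions available during the interval between $b_3$'s entry and $b_2$'s exit are too few to accommodate all the required swaps, given the forced positions of the other elements dictated by $\pi^*_{a_{10}}$, $\pi^*_{b_{10}}$, and $\pi^*_{c_{10}}$; equivalently, some element would have to occupy two different positions at the same step, or pre-empt a mandatory discovery swap from Lemma~\ref{remark2}.
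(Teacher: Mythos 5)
Your treatment of the ``moreover'' part is correct and is the same as the paper's: whichever disjunct holds, one of the gate-$15$ swaps $b_2b_1$ or $b_3b_1$ is missing, so $\hal(b_3)+\hal(b_2)+\hal(b_1)\le 2$ and Remark~\ref{rem:19hal} rules out $N^{bb}_{15}(\Pi)=19$.

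For the disjunction itself, however, what you give is a strategy, not a proof, and the gap sits exactly where the work lies. You assemble the right ingredients (Lemma~\ref{remark2} for the entry/exit orders, Lemma~\ref{lem:hal(a_i)} forcing $\hal(a_5)=3$ to be tight, Lemma~\ref{lem:11-13} confining the relevant monochromatic swaps), but you end with ``I expect the clean contradiction to come from a counting or positional argument'' without exhibiting it. The paper's argument is not a count of available gate-$15$ transpositions in a time window; it is a concrete positional chase whose crux you do not reproduce. Assuming $b_2$ and $b_1$ do swap at the $15$th gate, one records the permutation at the moment $a_6$ leaves the $13$--center with $b_2$, namely $(\ldots|\{a_2,a_3,b_1\}\,b_2|a_6,\ldots)$; after the $b_2b_1$ swap, $a_5$ (which re-enters the $13$--center with $b_2$) still needs a gate-$15$ swap with $a_2$ or $a_3$ to realize $\hal(a_5)=3$, since up to that point it can only have met $a_1$ and $a_4$ there. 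This forces an $a$ into the $16$th position, which is only possible if $b_1$ swaps with the leftmost $a$ of the $13$--center, i.e.\ $b_1$ is pushed to the left end of the $13$--center. Once $a_5$ leaves and $b_3$ enters, $a_4$ re-enters with $b_3$ and no further $b$ reaches the $13$--center until $a_4$ leaves it, so nothing can move $b_1$ back toward the $15$th gate; hence the $b_3b_1$ halving cannot occur. None of this mechanism appears in your sketch, and it is not clear that a generic scheduling or counting contradiction exists: what kills the configuration is the specific fact that $b_1$ gets stranded at the left end of the center. A minor symptom that the bookkeeping was not actually carried out: you state that the $b_i$'s leave the $13$--center through the $17$th gate paired with $a_i$'s, whereas Lemma~\ref{remark2}(1) has $b_{7-i+1}$ leaving through the $13$th gate when $a_i$ enters (with left/right conventions reversed in $\Pi^*$); keeping these orientations straight is precisely the delicate part of the proof you deferred.
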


\noindent{\em Proof of Claim~\ref{claim:145}}.
If $b_2$ does not change with $b_1$ in the $15$th--gate, by Remark~\ref{rem:19hal}, $N^{bb}_{15}(\Pi) \leq 18$.

So we assume that $b_2$ changes with $b_1$ in the $15$th--gate. Like $N^{aa}_{15}(\Pi) = 19$, by Remark~\ref{rem:19hal}  and Lemma~\ref{lem:hal(a_i)}, $\hal(a_5)$ is $3$. When $a_6$ leaves the $13$--center, this swap is with $b_2$, so in that moment we have the following situation
$$(...|\{a_2, a_3, b_1\} b_2| a_6,...).$$
When  $b_2$ changes with $b_1$ in the $15$th--gate, we have the following
$$(...|a_{2 \mbox{ \tiny or } 3}, b_2, b_1, a_{3 \mbox{ \tiny or } 2}|...),$$
$a_5$ re-enters in the $13$--center with $b_2$ and must to change with either $a_2$ or $a_3$ in the $15$th--gate to complete $3$ halvings because at most $a_5$ has changed in the $15$th--gate with $a_1$ and $a_4$, this implies that must be an $a$ in the $16$th--position and that is only possible if $b_1$ swaps with the leftmost $a$ of the $13$--center, and so when $a_5$ leaves the $13$--center and $b_3$ enters in it, the permutation is
$$(...|b_1, \{ a_{2 \mbox{ \tiny or } 3}, a_{3 \mbox{ \tiny or} 2} \}, b_3|a_5...),$$
but $a_4$ re-enters in the $13$--center with $b_3$, and there are no more $b$'s in the $13$--center until after $a_4$ leaves it, thus no one moves $b_1$ from the $13$th--position and therefore $b_3$ does not change with $b_1$ in the $15$th--gate.
This and Remark~\ref{rem:19hal} imply $N^{bb}_{15}(\Pi) \leq 18$. This completes the proof of Claim~\ref{claim:145}.

If $N^{bb}_{15}(\Pi)$ is $18$ and knowing that $\hal(b_3) + \hal (b_2) + \hal(b_1) \leq 2$, by Remark~\ref{rem:19hal} we get that $\hal(b_5)$ is $3$ and also we have the same configuration $(C,b_2, b_3, b_6 | - - - b_{10} | b_5, b_4, b_1, A)$. Then the hypotheses of the Claim~\ref{claim:145} are satisfied and consequently $N^{cc}_{15}(\Pi) \leq 18$.

But again, if $N^{cc}_{15}(\Pi) = 18$ and $\hal(c_3) + \hal(c_2) + \hal(c_1) \leq 2$ then $\hal(c_5)$ is equal to $3$ and, by Claim~\ref{claim:145}, $N^{aa}_{15}(\Pi) \leq 18$, and this is a contradiction. So $N^{aa}_{15}(\Pi) = 19, N^{bb}_{15}(\Pi) = 18$ and $N^{cc}_{15}(\Pi) \leq 17$.

Now we suppose that $N^{bb}_{15}(\Pi) \leq 17$. The only case we concern about is when $N^{cc}_{15}(\Pi) = 19$. Since $C$ satisfies the Claim~\ref{claim:145}, then in the moment that $C$ changes with $A$ we will get $N^{aa}_{15}(\Pi) \leq 18$, which is a contradiction. Thus $N^{aa}_{15}(\Pi) = 19, N^{bb}_{15}(\Pi) \leq 17$ and $N^{cc}_{15}(\Pi) \leq 18$.

So, $N_{15}(\Pi)=N_{15}^{mono}(\Pi)+N_{15}^{bi}(\Pi)=69$. This implies that $N_{14}(\Pi) = 75$, and by ($\ref{eq:credg}$) we are done. \qed

\section{Concluding Remarks}

In this paper we have presented a result that relate the number of $(\leq k)$-edges with $3$-decomposability. That is, every set of points in the plane which has a certain number of $(\leq k)$-edges, can be grouped into three independent equal sized sets. Theorem~\ref{thm:main} goes a step forward to the understanding of the structure of sets minimizing the number of $(\leq k)$-edges. Aichholzer et al.~\cite{AiGa} established that such sets always have a triangular convex hull. Here we show that these sets also are $3$-decomposable.

As an application of Theorem~\ref{thm:main}, we give a free computer-assisted proof that the rectilinear crossing number of $K_{30}$ is $9726$. This closes the gap between $9723$ and $9726$, the best lower and upper bounds previously known.

In view of Theorem~\ref{thm:main}, we now give a more precise version of the Conjecture 1 in~\cite{Ab:3sim}:

\begin{conj}\label{conj:1}
For each positive integer $n$ multiple of $3$, all crossing-minimal geometric drawings of $K_n$ have exactly $3\binom{k+2}{2}$ $(\leq k)$-edges for all $0 \le k \le n/3$.
\end{conj}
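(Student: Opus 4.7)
The natural plan is to extend, to all $n\equiv 0\pmod 3$, the $K_{30}$ template from Sections~\ref{sec:k30}–\ref{sec:proofk30}. Let $L(n)$ denote the lower bound on $\pcr(K_n)$ obtained by plugging the inequalities~(\ref{eq:edg}),~(\ref{eq:hallin}), and~(\ref{eq:nhallin}) into the crossing identity~(\ref{eq:credg}), and let $U(n)$ denote the number of crossings in the best known construction. Since the known optimal-looking constructions are themselves $3$-decomposable and meet~(\ref{eq:edg}) with equality for every $k<n/3$, the value $U(n)$ is precisely what~(\ref{eq:credg}) yields when one substitutes $E_{\le k}=3\binom{k+2}{2}$ for $k<n/3$ together with the prevailing halving count. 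The first step would be to prove $L(n)=U(n)$; once this is known, the conjecture follows almost immediately, since the coefficient $(n-2k-3)$ of $E_{\le k}(P)$ in~(\ref{eq:credg}) is strictly positive for every $k<n/3$, and any excess $E_{\le k}(P)>3\binom{k+2}{2}$ both increases that term and simultaneously tightens~(\ref{eq:hallin}) against the drawing. An exact analogue of the $K_{30}$ bookkeeping then shows that a single unit of slack at any single layer $k<n/3$ already forces $\pcr(P)>U(n)$, contradicting crossing-minimality, and together with Theorem~\ref{thm:main} this also recovers the $3$-decomposability conjecture of~\cite{Ab:3sim} for crossing-minimal drawings.

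To attack $L(n)=U(n)$ I would bootstrap on $3$-decomposability: conditionally assume it (retroactively justified by Theorem~\ref{thm:main} once tightness is proved) and use the extra structure to sharpen the halving bound~(\ref{eq:hallin}) in the style of Corollary~\ref{cor:no20} and Lemmas~\ref{lem:restrcent}–\ref{lem:1-2}. Concretely, the positions of the extreme elements $a_{n/3}, b_{n/3}, c_{n/3}$ in the critical permutations $\pi_{a_{n/3}}, \pi_{b_{n/3}}, \pi_{c_{n/3}}$ should be heavily constrained — each monochromatic swap involving an extreme element may be counted twice across different ``zones'' as in Lemma~\ref{lem:hal(a_i)} — and each such constraint would kill a candidate halving, refining $L(n)$ toward $U(n)$. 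Iterating the case analysis of Claims~\ref{claim:246} and~\ref{claim:145} in this more general setting, possibly organized by the shape of the perfect liberation sequence produced by Proposition~\ref{prop:3decomp}, is the natural next move.

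The main obstacle, and the reason the statement remains conjectural, is that as $n$ grows the numerical gap $U(n)-L(n)$ grows with it, so a finite catalogue of constraints on the layer $k=n/2-1$ alone will not suffice; one really does need fresh structural savings at every layer $n/3\le k<n/2$. A complementary, purely local strategy that sidesteps the global comparison with $\pcr(K_n)$ would be to show that any generalized configuration failing to saturate~(\ref{eq:edg}) at some layer $k<n/3$ admits a rerouting of its allowable sequence (in the spirit of the $\Pi\mapsto \Pi'$ construction of Proposition~\ref{prop:3decomp}) that strictly decreases the number of $(\le k)$-critical transpositions without increasing any other $N_{\le j}$. Making such a local surgery respect \emph{straight-line} realizability is the most delicate point — it is unclear that the combinatorial move has a geometric analogue — but the allowable-sequence framework of Section~\ref{sec:allseq} would at least settle the pseudolinear version of the conjecture, and that already seems a sensible intermediate target.
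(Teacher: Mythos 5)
You have picked a statement the paper does not prove: Conjecture~\ref{conj:1} is stated as an open problem (a sharpened form of the conjecture in~\cite{Ab:3sim}), and the authors explicitly present it as one of the main problems left to solve. So there is no proof in the paper to compare against, and your text, as you yourself concede, is a research plan rather than a proof; it cannot be accepted as establishing the statement.

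Beyond that, the plan has concrete gaps. First, everything hinges on $L(n)=U(n)$, i.e.\ on the lower bound obtained from~(\ref{eq:edg}),~(\ref{eq:hallin}),~(\ref{eq:nhallin}) via~(\ref{eq:credg}) matching the best constructions; but this equality is known to fail for large $n$ (the gap grows on the order of $n^4$), whereas one unit of slack in $E_{\le k}$ at a layer $k<n/3$ raises the bound in~(\ref{eq:credg}) by only $n-2k-3=O(n)$. Hence the step ``a single unit of slack at any layer already forces $\pcr(P)>U(n)$'' is simply false once $n$ is large; the $K_{30}$ argument closes a gap of exactly $3$ crossings by a finite case analysis at the layers $k=14,15$ (Corollary~\ref{cor:no20}, Claims~\ref{claim:246} and~\ref{claim:145}), and nothing in your outline produces the $\Theta(n^4)$ worth of additional savings needed in general. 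Second, the bootstrap is circular: Theorem~\ref{thm:main} yields $3$-decomposability only \emph{after} tightness of~(\ref{eq:edg}) at every $k<n/3$ is known, which is precisely the content of Conjecture~\ref{conj:1}; assuming $3$-decomposability to sharpen the halving bound and then ``retroactively justifying'' it by Theorem~\ref{thm:main} is not a valid argument without an independent source of tightness (in the $K_{30}$ proof that source is the numerical fact that any failure of tightness for $k\le 12$ already exceeds the upper bound $9726$, which is exactly what does not generalize). Third, the proposed local surgery would at best prove something stronger than needed --- that optimal sequences minimize every $E_{\le k}$ simultaneously --- and this does not follow from minimizing the weighted sum in~(\ref{eq:credg}), since a configuration could in principle trade slack at one layer for savings at another; moreover no mechanism is offered to keep such a rerouting realizable by points, so even granting it one would only obtain a pseudolinear statement, not the geometric conjecture.
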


We believe that Conjecture~\ref{conj:1} is one of the main problems to solve in order to understand the basic structure of the crossing-minimal geometric drawings of $K_n$.

\section{Acknowledgements}

We thank Gelasio Salazar for his help and valuable discussions. We also thank an anonymous referee for his suggestions and recommendations to improve the presentation of this paper.  

\section*{\refname}

\end{document}